      \theoremstyle{plain}
      \newtheorem{theorem}{Theorem}[section]
      \newtheorem*{theorem*}{Theorem}
	  \newtheorem*{claim*}{Claim}
      \newtheorem{proposition}[theorem]{Proposition}
      \newtheorem*{proposition*}{Proposition}
      \newtheorem{lemma}[theorem]{Lemma}
      \newtheorem*{lemma*}{Lemma}
      \newtheorem{corollary}[theorem]{Corollary}
      \newtheorem*{corollary*}{Corollary}
      \theoremstyle{definition}
      \newtheorem{definition}[theorem]{Definition}
      \newtheorem*{definition*}{Definition}
      \theoremstyle{remark}
      \newtheorem{remark}[theorem]{Remark}
      \newtheorem*{remark*}{Remark}
      \newtheorem*{example*}{Example}
\newcommand{\R}{\mathbb{R}}
\newcommand{\C}{\mathbb{C}}
\newcommand{\N}{\mathbb{N}}
\newcommand{\Z}{\mathbb{Z}}
\renewcommand{\tilde}{\widetilde}
\newcommand{\abs}[1]{\left| #1 \right|}
\newcommand{\norm}[1]{\left\lVert #1 \right\rVert}
\renewcommand{\O}{\Omega}
\renewcommand{\hat}{\widehat}
\renewcommand{\sp}{\operatorname{sp}}
\newcommand{\A}{\mathscr{A}}
\newcommand{\bm}{\left(\begin{matrix}}
\newcommand{\sm}{\end{matrix}\right)}
\newcommand{\Id}{\operatorname{Id}}
\newcommand{\Tinf}{{\operatorname{T}_\infty}}
\newcommand{\Tone}{{\operatorname{T}_1}}
\numberwithin{equation}{section}
\author{Eemeli Bl{\aa}sten\thanks{Research supported by the Mittag-Leffler Institute}, Lassi P\"aiv\"arinta\thanks{Research supported by the ERC 2010 Advanced Grant 267700}}
\title{Completeness of generalized transmission eigenstates}
\begin{document}
\maketitle

\section*{Abstract}
We prove the completeness of the generalized (interior) transmission eigenstates for the acoustic and Schr\"odinger equations. The method uses the ellipticity theory of Agranovich and Vishik.

\newpage
\section{Introduction}
In this paper we prove the completeness of the generalized eigenstates corresponding to the interior transmission problem
\begin{align}
\label{helmholtzEQ}
\big(\Delta + k^2(1+m)\big) w = 0& \quad \text{in } \Omega\\
(\Delta + k^2) v = 0& \quad \text{in } \Omega\\
\label{boundaryEQ1}
u = \tfrac{\partial}{\partial \nu} u = 0& \quad \text{on } \partial\Omega
\end{align}
for $u = w-v$ with $m>0$ smooth in $\overline{\O}$. For an overview of transmission eigenvalues we refer to \cite{CCbook}, \cite{CakoniGintidesHaddar}, \cite{CPS}, \cite{HitrikKrupchykOlaPaivarintaTEconstant2010} and \cite{PS}. Cakoni, Gintides and Haddar posed the question of completeness for the eigenstates in \cite{CakoniGintidesHaddar}. A sufficient condition for completeness of the generalized eigenstates for higher order elliptic operators is given in \cite{HitrikKrupchykOlaPaivarintaEllipticOps2011}.

Our method gives also a new proof for the existence of an infinite number of transmission eigenvalues and an upper bound for their counting function. While the mathematics of the present paper were already finished, we became aware of the manuscript of Luc Robbiano \cite{robbiano}, where similar results were shown. In contrary to Robbiano's proof, our argument is based Agranovich and Vishik's ellipticity condition \cite{AgranovichVishik}. Our method generalizes also to many other interior transmission problems than \eqref{helmholtzEQ} -- \eqref{boundaryEQ1}, for example the Schr\"odinger case
\begin{align}
\big((\Delta + k^2) + V\big)w = 0 & \quad \text{in } \Omega\\
(\Delta + k^2) v = 0 & \quad \text{in } \Omega\\
\label{boundaryEQ2}
u = \tfrac{\partial}{\partial \nu} u = 0& \quad \text{on } \partial\Omega
\end{align}
with $u = w - v$ again.
What is somewhat surprising, is that the boundary conditions \eqref{boundaryEQ1} and \eqref{boundaryEQ2} can be replaced with more general boundary conditions
\begin{equation}
\begin{aligned}
\partial_\nu^{m_1} u &= 0\\
\partial_\nu^{m_2} u &= 0
\end{aligned} \quad \text{on } \partial \Omega
\end{equation}
where $m_1, m_2 \in \{0, 1, 2, 3\}$ are different. See Section \ref{lastSection}.

Let us denote $\lambda = - k^2$ and $q = \tfrac{1}{m}$. It is well-known that the interior transmission problem \eqref{helmholtzEQ} -- \eqref{boundaryEQ1} with $w,v \in H^4(\Omega)$ can be reduced to seeking for nontrivial solutions $u \in H^4(\Omega)$ of the fourth order equation
\begin{align}
\label{pencil1}
T(\lambda) u = \big(\Delta - \lambda(1 + \tfrac{1}{q})\big) q  (\Delta - \lambda) u &= 0 \quad \text{in } \Omega\\
\label{pencil2}
u &\in H^2_0(\Omega)
\end{align}
From now on we shall only consider such operator pencils instead of the eigenvalue problems. We show that the problem \eqref{pencil1} -- \eqref{pencil2} is parameter-elliptic and its inverse is meromorphic of finite order. Completeness then follows from the Phragm\'en-Lindel\"of principle. Note that this is despite the fact that the problem \eqref{helmholtzEQ} -- \eqref{boundaryEQ1} is not even elliptic. Similar methods have already been used in the study of transmission eigenvalue problems, for example when establishing the location of transmission eigenvalues in the complex plane \cite{HitrikKrupchykOlaPaivarintaTElocations2011}. Lakshtanov and Vainberg \cite{LakshtanovVainbergBounds2012}, \cite{LakshtanovVainbergEllipticity2012}, \cite{LakshtanovVainberg} have also considered parameter-ellipticity in relation to transmission eigenvalues, but without using the operator pencil \eqref{pencil1}.

We note that the completeness of the generalized eigenstates would also follow directly from parameter-ellipticity by Theorem 5 in \cite{Agranovich1976} or more explicitly by Theorem 2 in \cite{Agranovich1990}. See also Section 6 in \cite{Agranovich1992}. Hence this proof could be reduced to just proving parameter-ellipticity. Nevertheless we have decided to keep the rest of the paper for a more self-contained article and because our method may work also for analytic families of operators $T(\lambda)$ that are not polynomial pencils. Another very interesting future work is studying the paper \cite{Agranovich1992} for getting a precise Weyl law instead of the upper bound in Theorem \ref{WeylLawThm}.

\subsection*{Aknowledgements}
The authors are grateful for Katya Krupchyk and Michael Hitrik for useful discussion and suggesting to use the Agranovich and Vishik method. We would also like to thank the referees for the two observations leading to the last paragraph of the introduction.

\newpage
\section{Idea of the proof}
The original idea on how to prove this result was to combine the ellipticity result of Agranovich and Vishik \cite{AgranovichVishik} with the methods of Robert and Lai \cite{LaiRobertNL,LaiRobertENG}. Basically the idea was to estimate $\norm{T(\lambda)^{-1}}$ as $\abs{\lambda} \to \infty$ on some rays and show that $T(\lambda) = A_0 + \lambda A_1 + \lambda^2 A_2$ has the following properties:
\begin{enumerate}
\item $A_0$ is a self-adjoint positive operator with dense domain $D$,
\item $A_0^{-1/2} A_1$ and $A_1 A_0^{-1/2}$ can be extended to bounded operators on $L^2(\O)$,
\item $A_0^{-1/2}$ is in some Schatten class $\mathscr{C}^p(L^2(\O), L^2(\O))$, $0<p<\infty$.
\end{enumerate}
These would imply completeness after some calculations. The main trick is to reduce the consideration of $T(\lambda)^{-1}$ to a matrix-valued resolvent $(\A - \lambda)^{-1}$. This reduction is the same kind as used to reduce a high-order ordinary differential equation into a first order matrix valued ordinary differential equation. Robert and Lai \cite[Prop 1.3 proof]{LaiRobertNL} attributed this trick to Agmon and Niremberg. Then use the theory for the resolvents of non-self-adjoint operators, for example Agmon \cite[Sec. 16]{AgmonLecturesOnElPDE} or Dunford and Schwartz \cite[XI.9.31]{DunfordSchwartz2}.

The details of the above deduction seemed to contain some redundancy. The first step was to simplify the completeness proofs from \cite{AgmonLecturesOnElPDE} and \cite{DunfordSchwartz2} to our case. Both of them were formulated for resolvents but used the Phragm\'en-Lindel\"of principle as a punchline. The most nontrivial part of those proofs is the so called \emph{Carleman's inequality}, which is a resolvent estimate for non-normal operators. After identifying and isolating this, the structure of the proof became very simple: It requires the analyticity of $T(\lambda)$, the meromorphicity of $T(\lambda)^{-1}$, its boundedness on some rays, and a limit to its growth on bigger and bigger circles.

Meromorphicity and the boundedness on rays follows like in the original way of proving the result, i.e. using Agravonich and Vishik \cite{AgranovichVishik} and the analytic Fredholm theorem. Actually, according to Theorem 5 in \cite{Agranovich1976}, this would be enough. We decide to show also other ways of proving completeness to keep the text more self-contained, and because using Nevanlinna theory may open doors to considering more general families of operators.

The last thing to do after showing meromorphicity and boundedness on some rays is to estimate the growth of $\norm{T(\lambda)^{-1}}$ on big circles. There are at least two ways of doing that. The ``black box'' way is to use Olavi Nevanlinna's $\Tone$, more specifically Theorem 3.2 in \cite{nevanlinna}. The other way is more concrete and gives a better result, but is longer and only works when $T(\lambda)$ is a polynomial. Basically, a linearization allows us to consider a resolvent $(\A - \lambda)^{-1}$ instead of the meromorphic $T(\lambda)^{-1}$.

We will have an estimate $\norm{T(\lambda)^{-1}} \leq \norm{(\A - \lambda)^{-1}}$, from which we can continue by using Carleman's inequality for resolvents. This inequality is of the form
\begin{equation}
\norm{ \varphi_K (\Id - K)^{-1}} \leq \exp C\norm{K}_{\mathscr{C}^p}^p,
\end{equation}
where $K$ is Schatten $p$ and $\varphi_K$ a so-called ``determinant''. In practice $K$ and $\varphi_K$ will depend analytically on $\lambda$ and the job of $\varphi_K$ is to remove the singularities of $(\Id - K(\lambda))^{-1}$. The usual way to prove Carleman's resolvent inequality is to reduce to a finite-dimensional case since $K$ is Schatten. The function $\varphi_K(\lambda)$ is built using a Weierstrass product. We will have
\begin{equation}
\begin{split}
K(\lambda) &= (\lambda - \lambda')(\A - \lambda')^{-1},\\
\varphi_K(\lambda) &= \prod_{j=0}^\infty \left( 1 - \frac{\lambda - \lambda'}{\lambda_j - \lambda'} \right) \exp \left( \frac{\lambda - \lambda'}{\lambda_j - \lambda'}  + \cdots + \frac{1}{k-1} \left( \frac{\lambda - \lambda'}{\lambda_j - \lambda'}  \right)^{k-1} \right),
\end{split}
\end{equation}
where $\lambda'$ is in the resolvent set of $\A$ and $\lambda_j$ are its eigenvalues, or equivalently the poles of $T(\lambda)^{-1}$. The operator $K$ is Schatten $p$ so $\sum \abs{\frac{\lambda - \lambda'}{\lambda_j - \lambda'}}^p < \infty$ for all $\lambda$. Hence the Weierstrass product converges to an entire function of order $p$. This also implies that $\abs{\varphi_K(\lambda)} > e^{-\abs{\lambda}^{p+\varepsilon}}$ on some larger and larger circles.

We finish by
\begin{multline}
\norm{T(\lambda)^{-1}} \leq \norm{(\A - \lambda)^{-1}} = \norm{ (\A - \lambda')^{-1} \frac{1}{\varphi_K(\lambda)} \varphi_K(\lambda) (\Id - K)^{-1} } \\
\leq \norm{(\A - \lambda')^{-1}} \frac{1}{\abs{\varphi_K(\lambda)}} \norm{\varphi_K(\lambda) (\Id - K(\lambda))^{-1}} \\
\leq C_{\A, \lambda'} e^{\abs{\lambda}^{p + \varepsilon}} e^{C\abs{\lambda}^p} \leq e^{C\abs{\lambda}^{p + \varepsilon}}
\end{multline}
on some circles $\abs{\lambda} = r_j$, $r_j \to \infty$.

\medskip
How does all of this combine into a proof? Start with $g \in L^2(\O)$ such that $(u,g) = 0$ for all generalized eigenstates $u$ of our operator $T(\lambda)$ with range $L^2(\O)$. To prove density of the span of generalized eigenstates, it's enough to show that the previous implies that $g = 0$, or in other words, that $(v,g) = 0$ for $v$ in a dense subset of $L^2(\O)$. Because $T(\lambda)$ will be invertible for some $\lambda'$, it is enough to show that 
\begin{equation}
w(\lambda) := \big( T(\lambda)^{-1}f, g \big) = 0
\end{equation}
for all $\lambda \in \C$ and $f \in L^2(\O)$.

The definition of generalized eigenstates will imply that $w$ is entire on $\C$. In particular the principal parts of all the Laurent series expansions of $T(\lambda)^{-1}$ will map $L^2(\O)$ into the span of the generalized eigenstates.

Boundedness of $T(\lambda)^{-1}$ on some rays and its growth rate on some circles will allow us to use the Phragm\'en-Lindel\"of principle to see that $w$ is actually bounded on $\C$. Hence it is a constant. The constant is zero since $T(\lambda)^{-1} \to 0$ as $\lambda \to \infty$ along some rays.

\bigskip
We will still give some more background information about the different tools used in the proof.

\subsection{Generalized eigenstates}
Let $T(\lambda)$ be an analytic family of operators in a Hilbert space $H$, $\lambda_0 \in \C$ a point of non-invertibility and consider the Taylor expansion $T(\lambda) = \sum B_m (\lambda - \lambda_0)^m$. A function $u$ will be called a generalized eigenstate of $T$ associated with the generalized eigenvalue $\lambda_0$ if there is $k\in\N$ and $u_0, u_1, \ldots, u_{k-1} \in H$ such that
\begin{equation}
\begin{split}
&B_0 u_0 = 0,\\
&B_1u_0 + B_0 u_1 = 0, \\
&B_2u_0 + B_1 u_1 + B_0 u_2 = 0,\\
&\quad\qquad\vdots\\
&B_ku_0 + B_{k-1}u_1 + \cdots + B_1 u_{k-1} + B_0 u = 0.
\end{split}
\end{equation}
As far as we know, the definition goes back to Keldysh \cite{KeldyshOrig,KeldyshENG}. One usually calls $v$ a generalized eigenvector of a matrix $M$ if $(M - \lambda)^k v = 0$ for some natural number $k$. Using the same name for both definitions is not a coincidence. 

Consider the case of $T(\lambda) = A_0 + \lambda A_1 + \lambda^2 A_2$ with $A_2$ invertible. Then the equation $T(\lambda) u = 0$ can be linearized to
\begin{equation}
(\A - \lambda) \bm u \\ v \sm = 0, \qquad \A = \bm 0 & A_2^{-1} \\ -A_0 & -A_1 A_2^{-1} \sm.
\end{equation}
To ease the notation assume that $\lambda = 0$ is a point of non-invertibility for $T$. This will make $B_j = A_j$ for all $j$. Consider the generalized eigenvectors of the matrix $\A$ corresponding to the eigenvalue $0$. We have
\begin{enumerate}
\item $\bm u_0 \\ v_0 \sm$ is an eigenvector $\Leftrightarrow \A \bm u_0 \\ v_0 \sm = 0 \Leftrightarrow A_0 u_0 = 0$ and $v_0 = 0$
\item $\A^2 \bm u_1 \\ v_1 \sm = 0 \Leftrightarrow \bm u_0 \\ v_0 \sm := \A \bm u_1 \\ v_1 \sm = \bm A_2^{-1} v_1 \\ -A_0 u_1 - A_1 A_2^{-1} v_1 \sm$ is an eigenvector $\Leftrightarrow u_0 = A_2^{-1} v_1$, $v_0 + A_0 u_1 + A_1 A_2^{-1} v_1 = 0$, $v_0 = 0$, $A_0 u_0 = 0$, which is equivalent to $A_0 u_0 = 0$, $A_1 u_0 + A_0 u_1 = 0$, $v_0 = 0$ and $v_1 = A_2 u_0$.
\end{enumerate}
Continuing similarly always defining $\bm u_j \\ v_j \sm = \A \bm u_{j+1} \\ v_{j+1} \sm$ we get the equation array
\begin{equation}
\begin{split}
v_0 = 0 \quad & A_0 u_0 = 0\\
v_1 = A_2 u_0 \quad & A_1 u_0 + A_0 u_1 = 0 \\
v_2 = A_2 u_1 \quad & A_2 u_0 + A_1 u_1 + A_0 u_2= 0 \\
 \vdots & \\
v_k = A_2 u_{k-1} \quad  & A_2 u_{k-2} + A_1 u_{k-1} + A_0 u_k = 0\\
 \vdots &
\end{split}
\end{equation}
which is just the definition for the generalized eigenstate of $T(\lambda)$ because we had $B_j = A_j$ when $\lambda = 0$ is the point of non-invertibility.

Both definitions have been used in different articles proving density results. For the case of $(M - \lambda)^k$ see for example Section 16 of \cite{AgmonLecturesOnElPDE}, the definition right before XI.6.29 in \cite{DunfordSchwartz2} and then XI.6.29, XI.9.29 and their corollaries. For the case of generalized eigenstate of an analytic family see for example Keldysh \cite{KeldyshOrig,KeldyshENG}, the introduction in Linden \cite{Linden} and Chapter II in Markus \cite{Markus}. When consulting Keldysh, note that his operator $A(\lambda)$ corresponds to our $(T(\lambda') - T(\lambda))T(\lambda')^{-1}$, where $T(\lambda')$ is assumed invertible and the difference at the two points will be a compact operator.

\subsection{Boundedness of $T(\lambda)^{-1}$ on rays}
The goal here is to be able to use the Phragm\'en-Lindel\"of principle on the function $(T(\lambda)^{-1}f,g)$, where $f$ and $g$ will be such that the map is entire. Hence we will have to show that $\norm{T(\lambda)^{-1}}$ is bounded on some rays.

It will be a straightforward application of Agranovich and Vishik \cite{AgranovichVishik} because $T(\lambda)$ will be \emph{elliptic with parameter} when taking into account the boundary values of the elements of its domain. We will not go into the details of their proof as their article is very clearly written. Their proof considers first a few model cases, namely $\O = \R^n$ and $\O = \R^{n-1} \times \R_+$, then they straighten the boundary and use a fine partition of unity to freeze the coefficients of the operator. The model cases and some semiclassical Sobolev space estimates then give the result:
\begin{theorem*}
Let $D = H^4 \cap H^2_0(\Omega)$ with norm $\norm{u}_D^2 = \norm{u}_{H^4}^2 + \abs{\lambda}^4 \norm{u}_{L^2}^2$. Assume that $T(\lambda) : D \to L^2(\O)$ is elliptic with parameter in a closed cone $Q$. Then there is $r_0 > 0$ such that if $\abs{\lambda}>r_0$ and $\lambda \in Q$, then $T(\lambda)^{-1}$ is bounded $L^2(\O) \to D$.
\end{theorem*}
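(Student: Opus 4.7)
The plan is to derive the a priori estimate
\begin{equation*}
\norm{u}_D \leq C\, \norm{T(\lambda) u}_{L^2(\O)}
\end{equation*}
for every $u \in D$ and every $\lambda \in Q$ with $\abs{\lambda} \geq r_0$, and then to promote it to invertibility by a Fredholm-index argument. Once this estimate is in hand, $T(\lambda)$ is injective with closed range, so it only remains to rule out a nontrivial cokernel; the claimed bound on $T(\lambda)^{-1}:L^2(\O)\to D$ is then just the a priori estimate read backwards.

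To obtain the estimate I would follow the original Agranovich-Vishik scheme. First treat two constant-coefficient model cases. On $\R^n$, parameter-ellipticity says that the principal symbol of $T(\lambda)$, viewed with $\lambda$ of weight two, is bounded below by a positive multiple of $(\abs{\xi}^2 + \abs{\lambda})^2$ uniformly for $\lambda\in Q$; inverting on the Fourier side gives a parametrix of order $-4$ with parameter, and its mapping properties yield the parameter-weighted estimate directly. On the half space $\R^{n-1}\times\R_+$, parameter-ellipticity packages the boundary data into a Shapiro-Lopatinskii condition with parameter: after Fourier transforming in the tangential variables, the problem reduces to an ODE system in the normal variable with the correct number of exponentially decaying modes, uniquely solvable uniformly in $(\xi', \lambda)$, and the resulting solution operator obeys the weighted estimate up to the boundary.

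I then pass to the variable-coefficient case on $\O$ by straightening $\d\O$ and introducing a partition of unity fine enough that the principal coefficients of $T(\lambda)$ are nearly constant on each patch. The model estimates combine into a global estimate, and the commutator and lower-order error terms are of strictly smaller total weight in the parameter-dependent norm, hence are absorbed into the left-hand side once $\abs{\lambda}$ is taken large enough. With the a priori estimate in hand, Fredholmness follows because $T(\lambda)$ differs from its constant-coefficient principal part only by terms that are compact $D\to L^2(\O)$, so the index is zero throughout the cone; injectivity for $\abs{\lambda}\geq r_0$ then upgrades to invertibility and the operator bound follows.

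The main obstacle is the half-space model case: one must verify that the parameter-dependent Shapiro-Lopatinskii condition produces uniformly bounded, exponentially decaying ODE solutions and track the parameter weights carefully so that they recombine with the tangential Fourier variables to match the norm of $D$. Once this is in place, the localization and perturbation steps are essentially technical bookkeeping, and the final step from a priori estimate to invertibility is a standard application of the analytic Fredholm machinery.
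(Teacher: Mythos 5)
Your outline follows the same route the paper takes: the paper does not reprove this statement but cites Agranovich--Vishik and sketches exactly your scheme (constant-coefficient model cases on $\R^n$ and on the half space with the parameter-dependent Shapiro--Lopatinskii condition, boundary straightening, a fine partition of unity to freeze coefficients, absorption of lower-order errors for large $\abs{\lambda}$ in the parameter-weighted norms). In the paper's application this is packaged as: uniqueness and the estimate $\norm{u}_{H^4}^2 + \abs{\lambda}^4\norm{u}_{L^2}^2 \leq C\norm{T(\lambda)u}_{L^2}^2$ from Agranovich--Vishik's Theorem 4.1, and existence from their Theorem 5.1, for $\lambda$ large in the closed cone.

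The one step you should repair is the passage from the a priori estimate to surjectivity. You justify index zero by saying $T(\lambda)$ ``differs from its constant-coefficient principal part only by terms that are compact $D\to L^2(\O)$''; that is false when the principal coefficients are variable, since the difference between the variable-coefficient principal part and any frozen-coefficient operator is itself of order four and is not compact from $H^4\cap H^2_0(\O)$ to $L^2(\O)$. Moreover an index argument alone is delicate here, because the natural way to pin the index down (invertibility at some parameter value) is exactly what is being proved. The standard fix --- and the one implicit in Agranovich--Vishik's existence theorem --- is already contained in your localization step: the patched-together model solution operators give a right parametrix $R(\lambda)$ with $T(\lambda)R(\lambda) = \Id + S(\lambda)$, where $\norm{S(\lambda)}_{L^2\to L^2}$ is small once $\abs{\lambda}$ is large in $Q$, so surjectivity follows from a Neumann series; alternatively one can run the same a priori estimate for the formally adjoint problem, which is parameter-elliptic with the same principal symbol, to exclude a cokernel. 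Combined with injectivity from the estimate, this yields invertibility and the bound on $T(\lambda)^{-1}: L^2(\O)\to D$.
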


For our case of interior transmission eigenvalues of either the Helmholtz or the Schr\"odinger equation, $T(\lambda)$ will be elliptic with parameter in every cone not touching $\R_-$ and vertex at the origin. The analytic Fredholm theorem applied to the Fredholm operator
\begin{equation}
T(\lambda) T(\lambda')^{-1} = \Id - (T(\lambda') - T(\lambda))T(\lambda')^{-1}
\end{equation}
where $T(\lambda')$ is invertible will imply that $T(\lambda)^{-1}$ is meromorphic. Hence the inverse exists everywhere except on a discrete set of poles.

The semiclassical norm-estimate from Agranovich and Vishik will give us $\norm{T(\lambda)^{-1}}_{L^2(\O) \to L^2(\O)} \to 0$ when $\abs{\lambda} \to \infty$ along rays not pointing towards $-\infty$. Hence $\norm{T(\lambda)^{-1}}$ will be bounded on a sufficiently large set of rays.

\smallskip
The only remaining matter is to define ellipticity with parameter. For that, $T(\lambda)$ must be a polynomial. Moreover some conditions are imposed on the symbols of $T$ and the boundary operators used to define its domain. Since the final estimate for $T(\lambda)^{-1}$ will take the parameter into account, the symbols have to be defined carefully. The symbol of $T(\lambda)$ is defined by: transform $\nabla \mapsto i \xi$, $\xi \in \R^n$,  and then consider the resulting operator as a polynomial\footnote{Actually there is no unique way of doing this. We may as well consider it as a polynomial in $(\xi, \sqrt{\lambda})$, however not all choices will give ellipticity with parameter.} in $(\xi, \lambda)$. The principal symbol will be the sum of all highest order terms. The same trick will be done for the operators defining the boundary conditions. If $\tau > 0$, 
\begin{equation}
T(\lambda) = \sum_{\abs{\alpha} + l \leq m} c_{\alpha, l}(x) \lambda^l \frac{d^{\abs{\alpha}}}{dx^\alpha},
\end{equation}
and $M = \max \{ \abs{\alpha} + \tau l \mid c_{\alpha, l} \neq 0\}$, then the principal symbol with weight $\tau$ is
\begin{equation}
T_0(x,\xi,\lambda) = \sum_{\abs{\alpha} + \tau l = M} c_{\alpha, l}(x) \lambda^l(i \xi)^\alpha.
\end{equation}
In this article we will have $\tau = 2$ when considering the Helmholtz and Schr\"odinger inner transmission problems.

Let $A(\xi, \lambda)$ be the principal symbol of the operator and $B_j(\xi, \lambda)$, $j=0,1,\ldots$ the principal symbols of the boundary operators. Then the requirements are
\begin{enumerate}[I.]
\item $A(\xi, \lambda) \neq 0$ when $(\xi, \lambda) \neq 0$,
\item After rotating and translating the coordinates such that $0 \in \partial \O$ and $-e_n$ is the outer normal, we require that the problem
\begin{equation}
\begin{cases}
A\big( (\xi', -i \tfrac{d}{dt}), \lambda\big) v (t) = 0, & t >0 \\
B_j \big( (\xi', -i \tfrac{d}{dt}), \lambda\big) v (t) = h_j, & t=0
\end{cases}
\end{equation}
has a single solution in the space of functions vanishing at infinity for each choice of the numbers $h_j$ as long as $(\xi', \lambda) \neq 0$.
\end{enumerate}
The second condition is a generalization of the well known Shapiro-Lopatinsky condition. See Shapiro \cite{Shapiro}, Lopatinsky \cite{Lopatinsky} and the introduction in \cite{KatyaSHAPLOP}.

\subsection{Growth of $T(\lambda)^{-1}$ on $\abs{\lambda} = R$}
Here the goal is again to be able to use the Phragm\'en-Linderl\"of theorem on the entire function $(T(\lambda)^{-1}f, g)$. For that we want to show that it is of order $p$, that is
\begin{equation}
\abs{ (T(\lambda)^{-1} f, g)} \leq e^{C \abs{\lambda}^p}
\end{equation}
We know from before that $T(\lambda)^{-1}$ is meromorphic, so the poles will cause problems. How can we avoid them? Basically we will say that $T(\lambda)^{-1}$ is a meromorphic family of operators of order $p$.

There are at least three ways to avoid the poles blowing up the estimate. The simplest one is to have the estimate only on bigger and bigger circles $\abs{\lambda} = r_j$, $r_j \to \infty$, none of them being too close to the poles. Another way would be to have it true whenever $\abs{\lambda - \lambda_j} > \abs{\lambda_j}^{-p-\varepsilon}$ for all the poles $\lambda_j$. See for example Titchmarsh \cite[8.71]{Titch}. The third way seems to be the most natural one for considering the growth of meromorphic functions. It uses $\Tinf$, the \emph{Nevanlinna characteristic} from function theory. We would have to show that
\begin{equation}
\Tinf (r, T(\lambda)^{-1}) \leq C r^p.
\end{equation}
These are all three sides of the same coin. It seems to relate to Nevanlinna theory, starting from Weierstrass products. The underlining idea here is that if $\lambda_j$ are the poles of $T(\lambda)^{-1}$ counting multiplicities, then we will have
\begin{equation}
\sum \abs{\lambda_j}^{-p} < \infty,
\end{equation}
and this is a very strong assumption in function theory. An entire function $f$ whose zeros satisfy such an inequality is necessarily of order $p$ and $1/f$ can be estimated from below by $\exp R_j^{-p-\varepsilon}$ on bigger and bigger circles of radius $R_j$. Moreover given such $\lambda_j$ we can construct an entire function of order $p$ vanishing only at those points.

Unfortunately the previous deductions can't be generalized for operator valued meromorphic functions so easily. For those readers who are more familiar with Nevanlinna theory we suggest to read about the characteristic $\Tone$ from \cite{nevanlinna}, especially Theorem 3.2. It will allow us to estimate the growth of the inverse of an analytic family of Fredholm operators if we know some simple properties of that family. For the other readers, we will also use a more functional analytic approach. We will construct the entire function $\varphi(\lambda)$ vanishing at the poles, and then use Carleman's resolvent inequality to get an upper bound for $\varphi(\lambda) T(\lambda)^{-1}$. This will lead to the desired estimate.

\newpage
\section{Completeness theorem}
\label{meroSect}
The theorem of this section has appeared in various sources in different forms. See for example Chapter II in Keldysh \cite{KeldyshENG}, XI.6.29 and XI.9.29 with related corollaries in Dunford and Schwartz \cite{DunfordSchwartz2}, and also Section 16 in Agmon \cite{AgmonLecturesOnElPDE}. We shall prove a version which suits well our needs.

In the next theorem $H$ is a fixed Hilbert space and $D \subset H$ a dense normed subspace. We will have the following three assumptions in this section:
\begin{enumerate}
\item Let $T(\lambda) : \C \to \mathscr{L}(D,H)$ be analytic.
\item Assume that $T(\lambda)^{-1}$ is meromorphic $\C \to \mathscr{L}(H,H)$, bounded on a number of rays partitioning $\C$ into cones of angle less than $\pi/\beta$ and that $\norm{T(\lambda)^{-1}} \to 0$ on some sequence in $\C$.
\item Moreover assume that if $f, g \in H$ are such that $\lambda \mapsto (T(\lambda)^{-1}f,g)$ is entire, then
\begin{equation}
\abs{(T(\lambda)^{-1}f, g)} \leq \norm{f}\norm{g}e^{C\abs{\lambda}^\beta}
\end{equation}
for sufficiently large $\lambda$.
\end{enumerate}

\begin{definition}
\label{genEigenDef}
For $\lambda_0 \in \C$, $m \in \N$, we write $B_m = \frac{1}{m!} \frac{d^m}{d\lambda^m} T(\lambda)_{|\lambda=\lambda_0}$. Now we say $u \in \sp(\lambda_0)$ if there is $k\in\N$ and non-zero $u_0, u_1, \ldots, u_{k-1} \in H$ such that
\begin{equation}
\begin{split}
&B_0 u_0 = 0,\\
&B_1u_0 + B_0 u_1 = 0, \\
&B_2u_0 + B_1 u_1 + B_0 u_2 = 0,\\
&\quad\qquad\vdots\\
&B_ku_0 + B_{k-1}u_1 + \cdots + B_1 u_{k-1} + B_0 u = 0.
\end{split}
\end{equation}
$\sp(\lambda_0)$ is the set of \emph{generalized eigenstates} related to the \emph{singular value} $\lambda_0$.
\end{definition}

\begin{theorem}
\label{denseThm}
Let $\lambda_0, \lambda_1, \ldots$ be the poles of $T(\lambda)^{-1}$. Then $\operatorname{span} \cup_{j=0}^\infty \sp(\lambda_j)$ is dense in $H$.
\end{theorem}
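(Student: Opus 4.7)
The plan is to apply the Hahn--Banach theorem: choose any $g \in H$ with $(u, g) = 0$ for every $u \in \bigcup_j \sp(\lambda_j)$ and show $g = 0$. For arbitrary $f \in H$, I would study the scalar function
\begin{equation*}
w(\lambda) = (T(\lambda)^{-1} f, g),
\end{equation*}
which is a priori meromorphic on $\C$ with poles contained in $\{\lambda_j\}$, and argue via Phragm\'en--Lindel\"of that it must vanish identically. Once this is known, fixing any regular value $\lambda'$ gives $(T(\lambda')^{-1} f, g) = 0$ for every $f \in H$; since the range of $T(\lambda')^{-1}$ is the dense subspace $D$, it follows that $g = 0$.

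The decisive step is to show that $w$ is in fact entire. Fix a pole $\lambda_j$ of order $K$ and write the Taylor expansion $T(\lambda) = \sum_{n \geq 0} B_n (\lambda - \lambda_j)^n$ and the Laurent expansion $T(\lambda)^{-1} = \sum_{m \geq -K} C_m (\lambda - \lambda_j)^m$. Comparing the coefficients of $(\lambda - \lambda_j)^{-K+\ell}$ in $T(\lambda) T(\lambda)^{-1} = \Id$ for $\ell = 0, 1, \ldots, K-1$ yields
\begin{equation*}
\sum_{i=0}^{\ell} B_{\ell-i}\, v_i = 0, \qquad v_i := C_{-K+i} f.
\end{equation*}
These are precisely the chain equations of Definition \ref{genEigenDef} with $u_i = v_i$ for $i < K-1$ and $u = v_{K-1}$, and the same conclusion applies to every $v_i$ by truncating the chain. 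Hence each nonzero $v_i$ lies in $\sp(\lambda_j)$, and orthogonality of $g$ against $\sp(\lambda_j)$ annihilates each $(C_m f, g)$ with $m < 0$, so the singularity of $w$ at $\lambda_j$ is removable. Since this argument applies at every pole, $w$ extends to an entire function.

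With $w$ entire, assumptions (ii) and (iii) supply exactly the ingredients for Phragm\'en--Lindel\"of: the bound $\abs{w(\lambda)} \leq \norm{f}\norm{g} e^{C\abs{\lambda}^\beta}$ for large $\abs{\lambda}$, together with boundedness on a family of rays partitioning $\C$ into sectors each of opening less than $\pi/\beta$. Applying the principle sector by sector forces $w$ to be bounded on all of $\C$, so by Liouville $w$ is constant. The second clause of assumption (ii) furnishes a sequence along which $\norm{T(\lambda)^{-1}} \to 0$, and therefore $w \to 0$ along the corresponding values; the constant is thus zero, closing the argument.

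The principal obstacle is the identification of the principal-part coefficients of $T(\lambda)^{-1}$ with generalized eigenstates. A minor but real subtlety is that Definition \ref{genEigenDef} nominally requires the intermediate $u_0, \ldots, u_{k-1}$ to be nonzero, while some of the $v_i$ above may vanish. This can be handled either by truncating each chain to start at the first nonzero predecessor, so that the remaining tail forms a valid chain in the strict sense, or by noting that vanishing $v_i$'s contribute nothing to $(v_i, g)$ and need not be accounted for individually. Beyond this bookkeeping, the proof reduces to combining Definition \ref{genEigenDef} with Phragm\'en--Lindel\"of and the three hypotheses.
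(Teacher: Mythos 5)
Your proposal is correct and follows essentially the same route as the paper: testing against a vector $g$ orthogonal to all generalized eigenstates, showing $w(\lambda)=(T(\lambda)^{-1}f,g)$ is entire by matching Taylor and Laurent coefficients in $T(\lambda)T(\lambda)^{-1}=\Id$ so that the principal-part operators map into $\sp(\lambda_j)$, and then concluding via Phragm\'en--Lindel\"of, Liouville, and the decay of $\norm{T(\lambda)^{-1}}$ along a sequence. Your remark about possibly vanishing intermediate chain vectors is a point the paper's proof silently skips, and your resolution of it is fine.
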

\begin{proof}
We will show that if $g \in H$ and $(u,g) = 0$ for all $u \in \cup_{j=0}^\infty \sp(\lambda_j)$, then $g=0$. This follows if $(v,g)=0$ for all $v \in D$ since the domain $D$ of $T(\lambda)$ is dense in $H$. It is enough to show that there is some $\lambda$ not a pole, such that
\begin{equation}
w(\lambda) := (T(\lambda)^{-1}f, g) = 0
\end{equation}
for all $f \in H$. We will show that $w$ is analytic and $w(\lambda) = 0$ for all $\lambda$.

If $\lambda'$ is not a pole, then $w(\lambda)$ is analytic near $\lambda'$. So let $\lambda'$ be a pole, for example $\lambda' = \lambda_0$. If $\abs{\lambda-\lambda_0}$ is small, we have the power series expansions
\begin{equation}
T(\lambda) = \sum_{m=0}^\infty B_m (\lambda-\lambda_0)^m, \quad T(\lambda)^{-1} = \sum_{n=-N}^\infty C_n (\lambda-\lambda_0)^n.
\end{equation}
Hence we get
\begin{multline}
\operatorname{Id} = T(\lambda) T(\lambda)^{-1} = \left(\sum_{m=0}^\infty B_m (\lambda-\lambda_0)^m \right) \left( \sum_{n=-N}^\infty C_n (\lambda-\lambda_0)^n \right)\\
= \left(\sum_{m=0}^\infty B_m (\lambda-\lambda_0)^m \right) \left( (\lambda-\lambda_0)^{-N} \sum_{n=0}^\infty C_{n-N} (\lambda-\lambda_0)^n \right) \\
= \sum_{k=0}^{N-1} \left( \sum_{j=0}^k B_{k-j}C_{j-N}\right) (\lambda-\lambda_0)^{k-N} + \sum_{k=N}^\infty \left( \sum_{j=0}^k B_{k-j}C_{j-N}\right) (\lambda-\lambda_0)^{k-N}.
\end{multline}
Both sides are meromorphic, and the left hand side is analytic. Hence the right hand side's power series must represent an analytic function. This means that
\begin{equation}
 \sum_{j=0}^k B_{k-j}C_{j-N} = 0, \qquad k = 0, 1, \ldots, N-1.
\end{equation}
Writing this out more explicitly gives
\begin{equation}
\begin{split}
&B_0 C_{-N} = 0,\\
&B_1C_{-N} + B_0 C_{1-N} = 0, \\
&B_2 C_{-N} + B_1 C_{1-N} + B_0 C_{2-N} = 0,\\
&\quad\qquad\vdots\\
&B_k C_{-N} + B_{k-1} C_{1-N} + \cdots + B_1 C_{-2} + B_0 C_{-1} = 0,
\end{split}
\end{equation}
which just says that $C_n : H \to \sp(\lambda_0)$ for $n = -1, -2, \ldots, -N$.

Since we have $g \perp \sp(\lambda_0)$, we see that
\begin{equation}
w(\lambda) = \sum_{n=-N}^\infty (C_n f, g) (\lambda-\lambda_0)^n = \sum_{n=0}^\infty (C_n f, g) (\lambda - \lambda_0)^n.
\end{equation}
Thus $w(\lambda) = (T(\lambda)^{-1}f, g)$ is analytic in a neighborhood of $\lambda_0$. By doing the same deductions for all the other poles too, we see that $w$ is analytic in the whole $\C$.

\smallskip
Partition the complex plane into a finite number of cones, all with vertex at a single common point, with angle less than $\pi / \beta$ and none of them having a pole on their boundary. By Assumption 2 we have
\begin{equation}
\norm{f}_H^{-1} \norm{g}_H^{-1}\abs{w(\lambda)} \leq \norm{T(\lambda)^{-1}} \leq C < \infty
\end{equation}
on the boundary rays, and by Assumption 3
\begin{equation}
\norm{f}_H^{-1} \norm{g}_H^{-1}\abs{w(\lambda)} \leq  e^{C \abs{\lambda}^\beta}
\end{equation}
when $\abs{\lambda}$ is large. The Phragm\'en-Lindel\"of principle (e.g. \cite[Thm 5.61]{Titch}) tells us now that $w(\lambda)$ is bounded on the whole $\C$. By Liouville's theorem, $w(\lambda)$ is a constant. This constant is zero by the last part of Assumption 2.
\end{proof}

\newpage
\section{Schatten class embedding}
\label{schattenSect}
We add here a proof of the fact that $H^2_0(\O)$ embeds compactly into $L^2(\O)$. Actually the embedding is of Schatten class $p > n/2$. See for example Definition XI.9.1 in \cite{DunfordSchwartz2}. We will need this fact when using analytic Fredholm theory and when proving that $T(\lambda)^{-1}$ is meromorphic of finite order.

\begin{lemma}
\label{schattenIncl}
Let $\O \subset \R^n$ be a bounded domain. Then the inclusion
\begin{equation}
\iota : H^2_0(\O) \hookrightarrow L^2(\O)
\end{equation}
is compact. Moreover it is Schatten $\mathscr{C}^p(H^2_0(\O), L^2(\O))$ for $p>n/2$.
\end{lemma}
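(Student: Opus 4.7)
The plan is to reduce the Schatten class assertion to an eigenvalue growth estimate for the clamped bi-Laplacian, and to bound its eigenvalues from below by the squared Dirichlet Laplacian eigenvalues via the min-max principle. Compactness of $\iota$ is not the main content, as it follows immediately from Rellich--Kondrachov (and \emph{a posteriori} from the Schatten bound anyway).

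On $H^2_0(\O)$, the quadratic form $\norm{\Delta u}_{L^2}^2$ is equivalent to the square of the $H^2$-norm (standard, via integration by parts together with Poincar\'e), so I take it as the inner product. Let $A$ be the corresponding Friedrichs extension on $L^2(\O)$. Because the form domain $H^2_0(\O)$ embeds compactly into $L^2(\O)$, $A$ has compact resolvent and admits an eigensystem $A\psi_j = \mu_j \psi_j$, with $0 < \mu_1 \le \mu_2 \le \cdots \to \infty$, $\{\psi_j\}$ orthonormal in $L^2(\O)$, and $\{\mu_j^{-1/2}\psi_j\}$ orthonormal in $H^2_0(\O)$. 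Since $\iota(\mu_j^{-1/2}\psi_j) = \mu_j^{-1/2}\psi_j$ in $L^2(\O)$, the singular values of $\iota$ are $s_j(\iota) = \mu_j^{-1/2}$, and the task reduces to showing $\sum_j \mu_j^{-p/2} < \infty$ for $p > n/2$.

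For the lower bound on $\mu_j$ I would compare with the Dirichlet Laplacian. Let $\{\phi_k\}$ be an $L^2$-orthonormal eigenbasis of the Friedrichs extension of $-\Delta$ on $H^1_0(\O)$, with eigenvalues $\lambda_k$ (no boundary regularity needed). Any $u \in H^2_0(\O) \subset H^1_0(\O)$ has $\Delta u \in L^2(\O)$, and expanding $u = \sum c_k \phi_k$ in $L^2(\O)$, the weak identity $(\Delta u, \phi_k) = -(\nabla u, \nabla\phi_k) = -\lambda_k c_k$ gives $\norm{\Delta u}_{L^2}^2 = \sum_k \lambda_k^2 \abs{c_k}^2$. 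Passing to the coefficient space via the $L^2$-isometric map $u \mapsto (c_k)$, the Rayleigh quotient $\norm{\Delta u}^2/\norm{u}^2$ becomes $\sum \lambda_k^2\abs{c_k}^2 / \sum \abs{c_k}^2$, whose $j$-th min-max value on all of $\ell^2$ is $\lambda_j^2$; restricting the infimum to subspaces contained in the image of $H^2_0(\O)$ can only increase it, so $\mu_j \ge \lambda_j^2$.

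To conclude, I need $\lambda_j \ge c\, j^{2/n}$. Since no regularity is assumed on $\partial\O$, I would obtain this by domain monotonicity rather than a sharp Weyl law: enclose $\O$ in a cube $Q = (0,L)^n$, so that $\lambda_j(\O) \ge \lambda_j(Q)$, and count integer lattice points $(k_1,\ldots,k_n)$ with $\pi^2 L^{-2}(k_1^2 + \cdots + k_n^2) \le \Lambda$ to obtain $\lambda_j(Q) \ge c j^{2/n}$. Combining, $\mu_j \ge c' j^{4/n}$, hence $\sum_j \mu_j^{-p/2} \le C\sum_j j^{-2p/n}$, which converges precisely when $p > n/2$. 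The main obstacle is producing the eigenvalue growth $\mu_j \ge c' j^{4/n}$ on a general bounded domain; the Navier-type comparison $\mu_j \ge \lambda_j^2$ is the device that avoids having to prove a Weyl law for the clamped bi-Laplacian directly.
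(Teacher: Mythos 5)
Your argument is correct, but it takes a genuinely different route from the paper. The paper's proof extends $u\in H^2_0(\O)$ by zero to a torus $\mathbb{T}^n$ and factors $\iota = P_\O \circ I \circ E$, then diagonalizes $I: H^2(\mathbb{T}^n)\hookrightarrow L^2(\mathbb{T}^n)$ explicitly in the Fourier basis, where the singular values are $(1+\abs{\xi}^2)^{-1}$, $\xi\in\Z^n$, and the Schatten-$p$ condition $p>n/2$ drops out of comparing $\sum_\xi (1+\abs{\xi}^2)^{-p}$ with an integral. You instead work intrinsically: equip $H^2_0(\O)$ with the equivalent norm $\norm{\Delta u}_{L^2}$ (harmless for Schatten membership, since singular values change only by the constants of the norm equivalence), identify the singular values of $\iota$ as $\mu_j^{-1/2}$ for the clamped bi-Laplacian $A$, and then get the eigenvalue growth $\mu_j \geq \lambda_j(\O)^2 \geq \lambda_j(Q)^2 \gtrsim j^{4/n}$ via the min-max comparison with the Dirichlet Laplacian (your coefficient-space computation $\norm{\Delta u}^2 = \sum_k \lambda_k^2\abs{c_k}^2$ is valid for $u\in H^2_0$ by density), domain monotonicity, and lattice counting on a cube; this gives $s_j(\iota)\lesssim j^{-2/n}$ and hence $\mathscr{C}^p$ for $p>n/2$, the same decay rate as the paper obtains. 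What each approach buys: your spectral-comparison argument needs no extension to a torus and no Fourier series, and works verbatim on an arbitrary bounded domain because everything is tied to the zero boundary conditions ($H^2_0$, $H^1_0$); the paper's Fourier-multiplier argument is more mechanical and, crucially for its later use, transfers with one line (Remark \ref{schattenRem}) to the embeddings $H^{k+2}(\O)\to H^k(\O)$ when an extension operator exists, which is what Theorem \ref{merothm} and Corollary \ref{inverseEstCor2} actually invoke; your method would need additional work (e.g.\ a Weyl bound without boundary conditions) to cover that variant.
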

\begin{proof}
Let $\mathbb{T}^n$ be the $n$-dimensional torus. Let $E : H^2_0(\O) \to H^2(\mathbb{T})$ be given as follows: extend $u \in H^2_0(\O)$ as zero to a big enough hypercube which is identified as $\mathbb{T}^n$ by extending $u$ then periodically.

Let $P_\O : L^2(\mathbb{T}^n) \to L^2(\O)$ be given by projecting to a periodic function on $\R^n$ and then multiplying by the characteristic function of $\O$. Let $I : H^2(\mathbb{T}^n) \hookrightarrow L^2(\mathbb{T}^n)$. Now
\begin{equation}
\iota = P_\O \circ I \circ E,
\end{equation}
where $E$ and $P_\O$ are bounded. It is enough to prove that $I$ is compact and Schatten $p$ for $p>n/2$.

We have $u \in H^2(\mathbb{T}^n)$ if and only if $(1 + \abs{\xi}^2)\hat{u}(\xi) \in \ell^2(\Z^n)$ and $u \in L^2(\mathbb{T}^n)$, with equivalent norms. Let
\begin{equation}
J : \begin{cases}L^2(\mathbb{T}^n) \to L^2(\mathbb{T}^n) \\ u \mapsto \sum_{\xi \in \Z^n} \frac{\langle u(y), e^{2\pi i y\cdot \xi}\rangle}{1 + \abs{\xi}^2} e^{2\pi i x \cdot \xi} \end{cases}. 
\end{equation}
We have $I = J \circ (J^{-1})_{|H^2(\mathbb{T}^n)}$, where $J^{-1}$ is bounded $H^2(\mathbb{T}^n) \to L^2(\mathbb{T}^n)$. Hence it is enough to prove that $J$ is compact and Schatten $p$ for $p>n/2$.

The sequence $(e^{2\pi i x \cdot \xi} \mid \xi\in\Z^n)$ is a Hiblert basis for $L^2(\mathbb{T}^n)$ and $(\frac{1}{1+\abs{\xi}^2} \mid \xi \in \Z^n)$ can be ordered to a decreasing sequence $\gamma_j$, with each $\gamma_j$ having finite multiplicity. Hence $J$ is compact and its eigenvalue-eigenstate pairs are
\begin{equation}
\Big( \frac{1}{1+\abs{\xi}^2}, e^{2\pi i x\cdot \xi} \Big)
\end{equation}
for $\xi \in \Z^n$.

Because $\frac{1}{1+\abs{\xi}^2} > 0$, $J$ is self-adjoint and positive. Hence its singular values are its eigenvalues, an so
\begin{multline}
\norm{J}_{\mathscr{C}^p(L^2(\mathbb{T}^n), L^2(\mathbb{T}^n))} = \sum_{\xi\in\Z^n} \frac{1}{(1 + \abs{\xi}^2)^p} \\
\leq \sum_{\xi\in\Z^n} (1+n)^p \int_{[\xi_1, \xi_1+1[ \times \cdots \times [\xi_n, \xi_n+1[} \frac{dm(x)}{(1+\abs{x}^2)^p} = (1+n)^p \int_{\R^n} \frac{dm(x)}{(1+\abs{x}^2)^p} \\
= (1+n)^p \sigma(\mathbb{S}^{n-1}) \int_0^\infty \frac{r^{n-1}dr}{(1+r^2)^p} < \infty
\end{multline}
when $2p - n + 1 > 1$, i.e. $p > n/2$.
\end{proof}

\begin{remark}
\label{schattenRem}
The same proof shows that $\iota' : H^{k+2}(\O) \to H^k(\O)$ is Schatten $p$ for $p > n/2$ if $\O$ has an extension operator $E:H^{k+2}(\O) \to H^{k+2}(\R^n)$. Now instead of extending $u$ as zero, we extend it as $\phi E u$, where $\phi \in C^\infty_0(\R^n)$ is constant one near $\O$.
\end{remark}

\newpage
\section{Invertibility and boundedness on rays}
\label{elliptParamSect}
We will start by proving Conditions I and II from page 74 of Agranovich and Vishik \cite{AgranovichVishik}. This means that the operator under consideration is parameter-elliptic. The goal is to prove that the maps
\begin{equation}
\begin{split}
T_H : \,&H^4\cap H^2_0(\O) \to L^2(\O)\\
& u \mapsto  \Delta q \Delta u - \lambda(\Delta q + q\Delta + \Delta) u + \lambda^2(1+q) u \\
T_S : \,&H^4\cap H^2_0(\O) \to L^2(\O)\\
& u \mapsto  \Delta q \Delta u + \Delta - \lambda(\Delta q + q\Delta + 1) u + \lambda^2q u
\end{split}
\end{equation}
corresponding to the interior transmission eigenvalue problems of the Helmholtz and Schr\"odinger equations have a bounded inverse when $\lambda \in \C\setminus \R_-$ is large enough, and that the inverse vanishes at infinity. Basically we want to prove that Assumption 2 of Section \ref{meroSect} is satisfied.

\bigskip
First write $T_H$ and $T_S$ in the form $\sum c_\alpha(x) D^\alpha$, where $\nabla = i D$:
\begin{multline}
T_H(x,D,\lambda) = q(x) (D\cdot D)^2 - 2 i \nabla q(x) \cdot D (D\cdot D) - \Delta q(x) (D\cdot D) \\
- \lambda\big( -(1 + 2q(x)) (D \cdot D) + 2i\nabla q(x) \cdot D + \Delta q(x)\big) + \lambda^2 (1 + q(x))
\end{multline}
\begin{multline}
T_S(x,D,\lambda) = q(x) (D\cdot D)^2 - 2 i \nabla q(x) \cdot D (D\cdot D) - (1 +\Delta q(x)) (D\cdot D) \\
- \lambda\big( - 2q(x) (D \cdot D) + 2i\nabla q(x) \cdot D + \Delta q(x) + 1\big) + \lambda^2 q(x)
\end{multline}
Along the differential operators we will use the boundary operators
\begin{eqnarray}
&&B_1(x,D,\lambda) = 1,\\
&&B_2(x,D,\lambda) = i \eta(x) \cdot D,
\end{eqnarray}
where $\eta(x)$ is the outer boundary normal vector at $x \in \partial\O$.

These will allow us to have
\begin{definition}
The \emph{principal symbols} are defined as
\begin{equation}
\begin{split}
T_{H0} (x,\xi,\lambda) &= q(x) \abs{\xi}^4 + \lambda(1+2q(x)) \abs{\xi}^2 + \lambda^2(1+q(x)),\\
T_{S0} (x,\xi,\lambda) &= q(x) \abs{\xi}^4 + 2 \lambda q(x) \abs{\xi}^2 + \lambda^2 q(x),\\
B_{10} (x,\xi,\lambda) &= 1,\\
B_{20} (x,\xi,\lambda) &= i \eta(x) \cdot \xi.
\end{split}
\end{equation}
\end{definition}
\begin{remark}
Note that these are gotten from the symbols of $T_H$, $T_S$, $B_1$ and $B_2$ by taking the highest order terms while considering $(\xi,\lambda^{1/2})$ as the variables.
\end{remark}

\begin{proposition}
\label{cond1prop}
Let $\O \subset \R^n$ be a bounded smooth domain and $q \in C^\infty(\overline{\O}, \R)$ be positive and bounded away from zero and infinity. Then Condition I \cite[p. 74]{AgranovichVishik} holds for $T_H$ and $T_S$. That is, for $x\in\overline{\O}$, $\lambda \in \C \setminus \R_-$, $\xi \in \R^n$ with $\abs{\xi} + \abs{\lambda} \neq 0$ we have $T_{H0} (x,\xi,\lambda) \neq 0$ and $T_{S0}(x,\xi,\lambda) \neq 0$.
\end{proposition}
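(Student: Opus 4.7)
The principal symbols are quadratic polynomials in $\lambda$ with coefficients that are simple polynomials in $|\xi|^2$, so my plan is to factor them explicitly and read off the zero set.

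For $T_{S0}$, I would immediately recognize $q(x)|\xi|^4 + 2\lambda q(x)|\xi|^2 + \lambda^2 q(x) = q(x)(|\xi|^2 + \lambda)^2$, which vanishes only when $q(x)=0$ or $\lambda = -|\xi|^2$. For $T_{H0}$, I would treat it as a quadratic in $\lambda$ with coefficients $1+q$, $(1+2q)|\xi|^2$, and $q|\xi|^4$, and compute the discriminant
\begin{equation*}
(1+2q)^2 |\xi|^4 - 4q(1+q)|\xi|^4 = |\xi|^4,
\end{equation*}
which is the key algebraic miracle. This gives the clean factorization
\begin{equation*}
T_{H0}(x,\xi,\lambda) = \bigl((1+q(x))\lambda + q(x)|\xi|^2\bigr)\bigl(\lambda + |\xi|^2\bigr),
\end{equation*}
whose zeros in $\lambda$ are $-|\xi|^2$ and $-q(x)|\xi|^2/(1+q(x))$.

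Next I would check the two cases. If $\xi \neq 0$, then since $q(x) > 0$, each of the candidate zeros $-|\xi|^2$ and $-q(x)|\xi|^2/(1+q(x))$ is a strictly negative real number, hence lies in $\R_-$, contradicting $\lambda \in \C \setminus \R_-$. If $\xi = 0$, then $T_{H0} = (1+q(x))\lambda^2$ and $T_{S0} = q(x)\lambda^2$, both of which are nonzero because $q, 1+q > 0$ and $\lambda \neq 0$ (since $|\xi|+|\lambda|\neq 0$). So in either case $T_{H0}$ and $T_{S0}$ are nonzero, establishing Condition I.

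There is no serious obstacle here; the only thing that needs attention is the sign/positivity bookkeeping: one must use that $q > 0$ on $\overline{\O}$ so that both factors in the Helmholtz factorization remain strictly negative multiples of $|\xi|^2$ (rather than one of them accidentally vanishing or becoming non-real), and one must remember that $\R_-$ is being used to denote the closed or open negative real axis so that the case $\lambda=0, \xi=0$ is excluded by the hypothesis $|\xi|+|\lambda|\neq 0$.
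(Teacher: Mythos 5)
Your proof is correct and follows essentially the same route as the paper: factor out $q(x)(|\xi|^2+\lambda)^2$ in the Schr\"odinger case, and for the Helmholtz case use the discriminant identity $(1+2q)^2|\xi|^4-4q(1+q)|\xi|^4=|\xi|^4$ to see that both roots in $\lambda$ are nonpositive real multiples of $|\xi|^2$, hence excluded for $\lambda\in\C\setminus\R_-$ with $|\xi|+|\lambda|\neq 0$. Your explicit factorization $\bigl((1+q)\lambda+q|\xi|^2\bigr)\bigl(\lambda+|\xi|^2\bigr)$ and your case split on $\xi=0$ versus $\xi\neq 0$ (the paper splits on $\lambda$ instead) are only cosmetic differences, and your bookkeeping of the degenerate cases is complete.
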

\begin{proof}
Let's first prove it for the Schr\"odinger case.  We have
\begin{equation}
T_{S0} = q(x) \left( \abs{\xi}^2 + \lambda \right)^2 = 0
\end{equation}
if and only if $\abs{\xi}^2 + \lambda = 0$. The latter is impossible when $\lambda \in \C \setminus \R_-$ and $\abs{\xi} + \abs{\lambda} \neq 0$.

For the Helmholtz operator we have two cases:
\begin{flushleft}
Case 1: $\lambda = 0$. Now $\abs{\xi} \neq 0$ so $T_{H0}(x,\xi,0) = 0 \Leftrightarrow q(x) = 0$ which is not true.

Case 2: $\lambda \neq 0$. Now $\xi \in \R^n$ is arbitrary. We have
\begin{equation}
\begin{split}
T_{H0}(x,\xi,\lambda) &= 0 \\
\Leftrightarrow \lambda &= \frac{-(1+2q(x)) \abs{\xi}^2 \pm \sqrt{(1+2q(x))^2 \abs{\xi}^4 - 4q(x)(1+q(x))\abs{\xi}^4}}{2q(x)} \\
&= \frac{-1 - 2q(x) \pm \sqrt{1 + 4q(x) + 4q(x)^2 - 4q(x) - 4q(x)^2}}{2q(x)} \abs{\xi}^2 \\
&= \frac{-1-2q(x) \pm 1}{2q(x)} \abs{\xi}^2 \in \R_-,
\end{split}
\end{equation}
which is impossible since $\lambda \in \C\setminus \R_-$.
\end{flushleft}
\end{proof}

\medskip
We have to do some preparations to prove Condition II. It is formulated using a \emph{coordinate system connected with the point $x_0 \in \partial \O$}. These coordinates are defined by translating $x_0$ to the origin and then rotating so that the inner normal vector $-\eta(x_0)$ maps to the vector $e_n = (0,\ldots, 0, 1)$. See \cite[p. 63]{AgranovichVishik}. Let $y = \Phi(x)$ denote these new coordinates and $\tilde{T_{H0}}$, $\tilde{T_{S0}}$, $\tilde{B_{10}}$ and $\tilde{B_{20}}$ the principal symbols of the differential and boundary operators in these new coordinates. We get\begin{equation}
\begin{split}
\tilde{T_{H0}}(y, D_y, \lambda) &= q(\Phi^{-1}(y)) (D_y \cdot D_y)^2 + \lambda(1 + 2q(\Phi^{-1}(y))) D_y \cdot D_y \\
&\qquad + \lambda^2(1+q(\Phi^{-1}(y))), \\
\tilde{T_{S0}}(y, D_y, \lambda) &= q(\Phi^{-1}(y)) (D_y \cdot D_y)^2 + 2 \lambda q(\Phi^{-1}(y))) D_y \cdot D_y + \lambda^2 q(\Phi^{-1}(y)), \\
\tilde{B_{10}}(y, D_y, \lambda) &= 1, \quad \tilde{B_{20}}(y, D_y, \lambda) = - \tfrac{d}{dy_n}
\end{split}
\end{equation}
because the Laplacian $\Delta = - D\cdot D$ is invariant under rigid motions and $B_{20}$ is just the boundary normal derivative.

\begin{proposition}
\label{cond2prop}
Let $\O \subset \R^n$ be a bounded smooth domain and $q \in C^\infty(\overline{\O}, \R)$ be positive and bounded away from zero and infinity. Then Condition II \cite[p. 74]{AgranovichVishik} holds for $T_H$ and $T_S$. That is, let $x_0\in\partial\O$, $\lambda \in \C \setminus \R_-$ and $\xi' \in \R^{n-1}$ such that $\abs{\xi'} + \abs{\lambda} \neq 0$. Then in coordinate systems connected with $x_0$, the ordinary differential equations
\begin{equation}
\begin{cases}
\label{helmholtzODE}
\tilde{T_{H0}}(0, (\xi', \tfrac{1}{i} \tfrac{d}{dt}), \lambda) v(t) = 0, \quad t > 0\\
\tilde{B_{10}} (0, (\xi', \tfrac{1}{i} \tfrac{d}{dt}), \lambda) v(t) = h_1, \quad t= 0\\
\tilde{B_{20}} (0, (\xi', \tfrac{1}{i} \tfrac{d}{dt}), \lambda) v(t) = h_2, \quad t = 0
\end{cases}
\end{equation} 
and
\begin{equation}
\begin{cases}
\tilde{T_{S0}}(0, (\xi', \tfrac{1}{i} \tfrac{d}{dt}), \lambda) v(t) = 0, \quad t > 0\\
\tilde{B_{10}} (0, (\xi', \tfrac{1}{i} \tfrac{d}{dt}), \lambda) v(t) = h_1, \quad t= 0\\
\tilde{B_{20}} (0, (\xi', \tfrac{1}{i} \tfrac{d}{dt}), \lambda) v(t) = h_2, \quad t = 0
\end{cases}
\end{equation} 
have one and exactly one solution inside $\mathscr{S}(\R_+)$ for any $h_1, h_2 \in \C$.
\end{proposition}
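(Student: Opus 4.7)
The plan is to reduce each problem to elementary linear algebra by writing down an explicit basis for the decaying solutions on $\R_+$. Set $L := -\tfrac{d^2}{dt^2} + \abs{\xi'}^2$; then substituting $D_t = \tfrac{1}{i}\tfrac{d}{dt}$ turns $D_y \cdot D_y$ into $L$, so each principal symbol becomes a polynomial in $L$, and in the rotated frame $\tilde{B_{10}} v = v(0)$ and $\tilde{B_{20}} v = -v'(0)$. Writing $q_0 := q(\Phi^{-1}(0)) > 0$, the Schr\"odinger symbol is the perfect square $\tilde{T_{S0}} = q_0(L+\lambda)^2$, and the discriminant computation already carried out in the proof of Proposition \ref{cond1prop} factors the Helmholtz symbol as $\tilde{T_{H0}} = q_0 (L+\lambda)\bigl(L + \tfrac{1+q_0}{q_0}\lambda\bigr)$.

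For each factor $L + c\lambda$ with $c > 0$, the decaying solutions of $(L+c\lambda) v = 0$ on $\R_+$ are multiples of $e^{-\mu t}$ with $\mu$ the principal square root of $\abs{\xi'}^2 + c\lambda$, provided $\abs{\xi'}^2 + c\lambda \notin (-\infty, 0]$. This exclusion is the crux of the verification: if $\abs{\xi'}^2 + c\lambda$ were in $\R_{\leq 0}$, then $c\lambda \in \R_{\leq 0}$, hence $\lambda \in \R_{\leq 0}$; the hypothesis $\lambda \in \C \setminus \R_-$ then forces $\lambda = 0$, and $\abs{\xi'}^2 \leq 0$ gives $\xi' = 0$, contradicting $\abs{\xi'} + \abs{\lambda} \neq 0$. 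Consequently $\mathrm{Re}(\mu) > 0$ and $e^{-\mu t} \in \mathscr{S}(\R_+)$.

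To assemble the full two-dimensional decaying space and apply the boundary conditions I split into two cases. When the two characteristic roots coincide, which occurs for Schr\"odinger always and for Helmholtz precisely when $\lambda = 0$ (the equality $\mu_1^2 = \mu_2^2$ reduces to $\lambda = \tfrac{1+q_0}{q_0}\lambda$, i.e.\ $\lambda = 0$), the decaying solutions are $(A + Bt) e^{-\mu t}$, and $v(0) = h_1$, $-v'(0) = h_2$ give the triangular system $A = h_1$, $\mu A - B = h_2$, with unit determinant. When the Helmholtz roots $\mu_1 = \sqrt{\abs{\xi'}^2 + \lambda}$ and $\mu_2 = \sqrt{\abs{\xi'}^2 + \tfrac{1+q_0}{q_0}\lambda}$ are distinct (they cannot differ only by a sign because both have positive real part), the decaying solutions are $A e^{-\mu_1 t} + B e^{-\mu_2 t}$ and the resulting boundary matrix has determinant $\mu_2 - \mu_1 \neq 0$. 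In every case the $2\times 2$ system has a unique solution $(A,B) \in \C^2$, yielding a unique $v \in \mathscr{S}(\R_+)$.

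The main obstacle is the half-line exclusion argument in the second paragraph: it is what invokes the full strength of the hypotheses $\lambda \in \C \setminus \R_-$, $q_0 > 0$, and $\abs{\xi'} + \abs{\lambda} \neq 0$, and it is exactly what would fail if one allowed $\lambda \in \R_-$. Once those exclusions are secured, selecting the decaying branch of each characteristic root and inverting the $2\times 2$ boundary system are routine.
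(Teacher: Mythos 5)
Your proposal is correct and follows essentially the same route as the paper: reduce to the constant-coefficient ODE, find the characteristic roots (your factorization $q_0(L+\lambda)(L+\tfrac{1+q_0}{q_0}\lambda)$ is the same computation as the paper's quadratic in $\abs{\xi'}^2-r^2$), use $\lambda\notin\R_-$, $q_0>0$, $\abs{\xi'}+\abs{\lambda}\neq 0$ to select the two decaying branches, and check the $2\times 2$ boundary determinant in the coincident-root case ($\lambda=0$ for Helmholtz, always for Schr\"odinger) and the distinct-root case. Your explicit half-line exclusion argument is a slightly more detailed justification of the paper's one-line remark that choosing $\Re r_i\neq 0$ is possible, but the substance is identical.
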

\begin{proof}
Write $q = q(\Phi^{-1}(0)) = q(x_0)$. Consider the case of $T_{H0}$ first. The ordinary differential equation is then
\begin{equation}
\begin{cases}
q(\abs{\xi'}^2 - \tfrac{d^2}{dt^2})^2 v + \lambda(1+2q) (\abs{\xi'}^2 - \tfrac{d^2}{dt^2}) v + \lambda^2(1+q) v = 0, \quad t>0\\
v(0) = h_1, \quad v'(0) = -h_2.
\end{cases}
\end{equation}
We will solve it using the method of characteristic polynomial. Let 
\begin{equation}
P(r) = q (\abs{\xi'}^2 - r^2)^2 + \lambda(1+2q) (\abs{\xi'}^2 - r^2) + \lambda^2(1+q).
\end{equation}
Now
\begin{equation}
\begin{split}
P(r) = 0 \Leftrightarrow \abs{\xi'}^2 - r^2 &= \frac{-\lambda(1+2q) \pm \sqrt{\lambda^2 (1+2q)^2 - 4q\lambda^2(1+q)}}{2q} \\
&= \frac{-1-2q \pm \sqrt{1 + 4q + 4q^2 -4q - 4q^2}}{2q}\lambda \\
&= \frac{-1-2q \pm 1}{2q} \lambda = \begin{cases} -\lambda \\ -\lambda(1+1/q) \end{cases}.
\end{split}
\end{equation}
Let $r_1^2 = r_2^2 = \lambda + \abs{\xi'}^2$ and $r_3^2 = r_4^2 = \lambda(1+1/q) + \abs{\xi'}^2$ with $\Re r_1, \Re r_3 > 0$ and $\Re r_2, \Re r_4 < 0$. This is possible since $\lambda \notin \R_-$, $q \in \R_+$ and $\abs{\xi'} + \abs{\lambda} \neq 0$.

\smallskip
Case 1: $\lambda = 0$ and $\abs{\xi'} \neq 0$. Now we have $r_1 = r_3 = \abs{\xi'}$ and $r_2 = r_4 = -\abs{\xi'}$. They give the general solution
\begin{equation}
v(t) = (c_1 + c_3 t) e^{\abs{\xi'} t} + (c_2 + c_4 t) e^{-\abs{\xi'} t}.
\end{equation}
Because we are looking for $v$ vanishing at infinity, we must have $c_1 = c_3 = 0$. Now the boundary conditions reduce to
\begin{equation}
c_2 = v(0) = h_1, \qquad -\abs{\xi'}c_2 + c_4 = v'(0) = -h_2,
\end{equation}
whose unique solution is $c_2 = h_1$, $c_4 = \abs{\xi'}h_1 - h_2$.

\smallskip
Case 2: $\lambda \neq 0$ and $\xi' \in \R^{n-1}$ arbitrary. Now $r_1, r_2, r_3$ and $r_4$ are all different and hence the general solution is
\begin{equation}
v(t) = c_1 e^{r_1 t} + c_2 e^{r_2 t} + c_3 e^{r_3 t} + c_4 e^{r_4 t}.
\end{equation}
By the condition at infinity, we have similarly that $c_1 = c_3 = 0$. The boundary conditions are now
\begin{equation}
c_2 + c_4 = v(0) = h_1, \qquad r_2 c_2 + r_4 c_4 = v'(0) = -h_2.
\end{equation}
This has a unique solution since $\left\vert \begin{smallmatrix} 1 & 1\\ r_2 & r_4 \end{smallmatrix} \right\vert = r_4 - r_2 \neq 0$.

\medskip
Tackle the case of $T_{S0}$ now. The ordinary differential equation is
\begin{equation}
\begin{cases}
q \left( \abs{\xi'}^2 - \tfrac{d^2}{dt^2} + \lambda\right)^2 v  = 0, \quad t>0\\
v(0) = h_1, \quad v'(0) = -h_2.
\end{cases}
\end{equation}
We will solve it using the method of characteristic polynomial. Let 
\begin{equation}
P(r) = q (\abs{\xi'}^2 + \lambda - r^2)^2.
\end{equation}
Now
\begin{equation}
P(r) = 0 \Leftrightarrow r^2 = \abs{\xi'}^2 + \lambda,
\end{equation}
each of the roots having multiplicity two. Let $r_1 = r_3$ and $r_2 = r_4$ be the solutions with $\Re r_1, \Re r_3 > 0$ and $\Re r_2, \Re r_4 < 0$. This is possible since $\lambda \notin \R_-$ and $\abs{\xi'} + \abs{\lambda} \neq 0$. They give the general solution
\begin{equation}
v(t) = (c_1 + c_3 t) e^{r_1 t} + (c_2 + c_4 t) e^{r_2 t}.
\end{equation}
Because we are looking for $v$ vanishing at infinity, we must have $c_1 = c_3 = 0$. Now the boundary conditions reduce to
\begin{equation}
c_2 = v(0) = h_1, \qquad r_2 c_2 + c_4 = v'(0) = -h_2,
\end{equation}
whose unique solution is $c_2 = h_1$, $c_4 = -r_2 h_1 - h_2$.
\end{proof}

\bigskip
We can now prove invertibility. Note that the estimate will hold only for large $\lambda$ contained in a closed sector not touching $\R_-$. We will have to use some functional analysis to get the invertibility everywhere except for a discrete set of points.
\begin{proposition}
\label{invprop}
Let $\O \subset \R^n$ be a bounded smooth domain and $q \in C^\infty(\overline{\O}, \R)$ be positive and bounded away from zero and infinity. Let $Q \subset \C$ be a closed sector in the complex plane with vertex at the origin and not containing $\R_-$. Now there is $R \geq 0$ such that if $\lambda \in Q$, $\abs{\lambda} \geq R$ and $f\in L^2(\O)$, then there is a unique $u \in H^4\cap H^2_0(\O)$ such that $T_{H}(\lambda) u = f$. Moreover we have the estimate
\begin{equation}
\norm{u}_{H^4(\O)}^2 + \abs{\lambda}^4 \norm{u}_{L^2(\O)}^2 \leq C \norm{f}_{L^2(\O)}^2
\end{equation}
with $C$ independent of $u$, $f$ and $\lambda$. The claim holds for $T_{S}$ too.
\end{proposition}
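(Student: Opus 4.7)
The plan is to apply the Agranovich--Vishik theorem quoted in Section 2 directly, since the bulk of the work has already been done. Propositions \ref{cond1prop} and \ref{cond2prop} verify Conditions I and II of \cite[p.~74]{AgranovichVishik} for $T_H$ and $T_S$ together with the boundary operators $B_1 = 1$ and $B_2 = i\eta\cdot D$, pointwise at every $\lambda \in \C \setminus \R_-$. Since $Q$ is a closed cone with vertex at the origin avoiding $\R_-$, these conditions hold throughout $Q$, and the verifications are in fact uniform in $(x_0,\xi',\lambda)$ with $x_0 \in \partial\O$ and $(\xi',\sqrt{\lambda})$ on the unit sphere in $\R^n$; this is what it means for $T_H$ and $T_S$ to be parameter-elliptic on $Q$ with weight $\tau = 2$.

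Next I would observe that the operators $B_1, B_2$ are precisely the Dirichlet traces carving $H^2_0(\O)$ out of $H^4(\O)$, so the domain $D$ in the Agranovich--Vishik theorem stated in Section 2 coincides with our $H^4\cap H^2_0(\O)$ equipped with the $\lambda$-dependent norm $\norm{u}_D^2 = \norm{u}_{H^4}^2 + \abs{\lambda}^4 \norm{u}_{L^2}^2$. Applying that theorem to $T_H(\lambda)$ and separately to $T_S(\lambda)$ produces an $R \geq 0$ such that for $\lambda \in Q$ with $\abs{\lambda} \geq R$ the operator $T(\lambda) : H^4\cap H^2_0(\O) \to L^2(\O)$ admits a bounded inverse whose $L^2 \to D$ operator norm is bounded uniformly in $\lambda$. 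Unwrapping the definition of $\norm{\cdot}_D$ gives exactly
\[
\norm{u}_{H^4(\O)}^2 + \abs{\lambda}^4 \norm{u}_{L^2(\O)}^2 \leq C\norm{f}_{L^2(\O)}^2,
\]
with $C$ independent of $u$, $f$, and $\lambda \in Q \cap \{\abs{\lambda} \geq R\}$, simultaneously yielding existence, uniqueness, and the claimed bound.

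The only non-bookkeeping step, and hence the one to check most carefully, is the passage from the pointwise verifications in Propositions \ref{cond1prop} and \ref{cond2prop} to the uniform parameter-ellipticity that Agranovich--Vishik actually consume as input. Concretely, one must confirm that the roots $r_j$ computed in Proposition \ref{cond2prop} and the associated Shapiro--Lopatinsky determinant $r_4 - r_2$ stay bounded away from zero as $(\xi', \sqrt{\lambda})$ sweeps the unit sphere with $\lambda \in Q$ and $x_0$ varies over $\partial\O$. This follows from the pointwise non-vanishing already established, continuity of the roots in $(x_0,\xi',\lambda)$, the smoothness and positive lower bound of $q$, and compactness of the relevant slice. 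The remainder is a transcription into the norms appearing in the proposition.
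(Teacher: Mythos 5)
Your proposal is correct and follows essentially the same route as the paper: verify Conditions I and II of Agranovich--Vishik via Propositions \ref{cond1prop} and \ref{cond2prop}, identify $H^4\cap H^2_0(\O)$ with the trace conditions imposed by $B_1$ and $B_2$, and then invoke the Agranovich--Vishik parameter-elliptic theory (the paper cites their Theorems 4.1 and 5.1 directly, with the convention that their parameter equals $\lambda^{1/2}$ and is hidden in semiclassical norms) to obtain existence, uniqueness and the uniform estimate. Your added compactness/continuity remark about uniformity over the unit sphere in $(\xi',\sqrt{\lambda})$ is a reasonable sanity check but is already subsumed in the Agranovich--Vishik machinery, so it does not constitute a departure from the paper's argument.
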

\begin{proof}
We refer to the paper from Agranovich and Vishik \cite{AgranovichVishik}. Note that in that paper the authors write $q$ for the parameter, and in there it has the same weight as differentiation. Hence choose $q = \lambda^{1/2}$ when consulting their results.

Proposition \ref{cond1prop} and Proposition \ref{cond2prop} show that Condition I and Condition II in \cite[p. 74]{AgranovichVishik} are satisfied for both $T_{H}$ and $T_{S}$. Note that 
\begin{equation}
u \in H^4 \cap H^2_0(\O) \Leftrightarrow u \in H^4(\O) \text{ and } \operatorname{Tr} u = \operatorname{Tr} \eta(x) \cdot \nabla u = 0.
\end{equation}
Hence Theorem 5.1 in \cite[p. 84]{AgranovichVishik} gives existence and Theorem 4.1 in \cite[p. 75]{AgranovichVishik} gives uniqueness and the estimate. Note that they have hidden the parameter $\lambda^{1/2}$ into the semiclassical Sobolev space norms unlike us.
\end{proof}

\begin{theorem}
\label{merothm}
Let $\O \subset \R^n$ be a bounded smooth domain and $q \in C^\infty(\overline{\O}, \R)$ be positive and bounded away from zero and infinity and let $T$ denote either one of $T_{H}$ or $T_{S}$. Then $T(\lambda)^{-1}$ is meromorphic $\C \to \mathscr{L}(L^2(\O))$ and its poles are in a neighborhood of $\R_-$ of the form $\cup_{t>0} B(-t,r(t))$, where $r(t)t^{-1} \to 0$ at infinity. Moreover the inverse satisfies
\begin{equation}
\norm{T(\lambda)^{-1}} \leq C \abs{\lambda}^{-2}
\end{equation}
on all rays not touching its discrete set of poles and not having direction $(-1,0)$. Here $C$ depends on the ray.
\end{theorem}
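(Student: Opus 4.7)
My plan is to deduce meromorphicity from the analytic Fredholm theorem applied to $T(\lambda) T(\lambda')^{-1}$ at a reference point $\lambda'$ where $T(\lambda')$ is invertible, and then to read off the pole localization and the ray estimate directly from the parameter-elliptic bound of Proposition \ref{invprop}. First I would pick $\lambda' \in \C \setminus \R_-$ of sufficiently large modulus so that, by Proposition \ref{invprop}, $T(\lambda') : H^4 \cap H^2_0(\O) \to L^2(\O)$ is invertible. The quadratic pencil decomposes as $T(\lambda) = A_0 + \lambda A_1 + \lambda^2 A_2$, where inspection of the formulas for $T_H$ and $T_S$ shows that $A_2$ is multiplication by a smooth function and $A_1$ is a differential operator of order at most two. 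Hence $T(\lambda) - T(\lambda') = (\lambda - \lambda') A_1 + (\lambda^2 - \lambda'^2) A_2$ is a differential operator of order at most two, depending polynomially on $\lambda$. Defining
\begin{equation*}
K(\lambda) := (T(\lambda') - T(\lambda)) T(\lambda')^{-1} : L^2(\O) \to L^2(\O),
\end{equation*}
this mapping factors as $L^2(\O) \to H^4 \cap H^2_0(\O) \to H^2(\O) \hookrightarrow L^2(\O)$, where the last inclusion is Schatten $p$ for every $p > n/2$ by Lemma \ref{schattenIncl} together with Remark \ref{schattenRem}. Thus $K(\lambda)$ is an entire Schatten-$p$-valued family on $L^2(\O)$ with $K(\lambda') = 0$, so the analytic Fredholm theorem produces a meromorphic inverse $(\Id - K(\lambda))^{-1} : \C \to \mathscr{L}(L^2(\O))$, and $T(\lambda)^{-1} = T(\lambda')^{-1}(\Id - K(\lambda))^{-1}$ is meromorphic into $\mathscr{L}(L^2(\O))$ as claimed.

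Next I would apply Proposition \ref{invprop} sectorwise. For $\epsilon \in (0, \pi/2)$ let $Q_\epsilon = \{re^{i\theta} : r \geq 0,\ -\pi + \epsilon \leq \theta \leq \pi - \epsilon\}$, a closed sector with vertex at the origin not touching $\R_-$. The proposition provides $R_\epsilon$ such that $T(\lambda)^{-1}$ exists on $Q_\epsilon \cap \{\abs{\lambda} \geq R_\epsilon\}$ with $\norm{T(\lambda)^{-1}}_{L^2 \to L^2} \leq C_\epsilon \abs{\lambda}^{-2}$. Every pole of modulus at least $R_\epsilon$ therefore sits in the open angular $\epsilon$-wedge around $\R_-$, hence within distance $\abs{\lambda} \sin\epsilon$ of the negative real axis; letting $\epsilon(t) \to 0$ as $t \to \infty$ packages the poles into a horn $\bigcup_{t > 0} B(-t, r(t))$ with $r(t)/t \to 0$. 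For any ray from the origin avoiding both the direction $(-1, 0)$ and the discrete pole set, the ray lies in some $Q_\epsilon$, so the same estimate yields $\norm{T(\lambda)^{-1}} \leq C \abs{\lambda}^{-2}$ for $\abs{\lambda} \geq R_\epsilon$, while on the complementary compact portion of the ray the inverse is analytic and hence norm-bounded by a constant absorbed into $C$.

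The hard part is not the analytic Fredholm deduction itself but the verification that $K(\lambda)$ really lives in the Schatten class on $L^2(\O)$. This rests on the observation that the top $\lambda$-coefficient $A_2$ is a zeroth-order operator, so $T(\lambda) - T(\lambda')$ drops two derivatives relative to $T(\lambda')$; this is precisely what lets the composition with $T(\lambda')^{-1}$ factor through the Schatten-$p$ embedding $H^2(\O) \hookrightarrow L^2(\O)$ of Section \ref{schattenSect}. Everything else is routine bookkeeping on top of the Agranovich-Vishik estimate.
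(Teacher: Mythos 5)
Your proposal is correct and follows essentially the same route as the paper: invertibility at a reference point $\lambda'$ from Proposition \ref{invprop}, analytic Fredholm applied to $\Id - (T(\lambda')-T(\lambda))T(\lambda')^{-1}$ with compactness coming from the $H^2(\O)\hookrightarrow L^2(\O)$ embedding of Section \ref{schattenSect}, and the pole localization and ray bound read off sectorwise from the parameter-elliptic estimate, with the bounded portion of each ray handled by analyticity on a compact set. The only cosmetic difference is that you invoke the Schatten-$p$ property where the paper needs only compactness at this stage (the Schatten refinement is reserved for Theorem \ref{inverseEstimate}), which does no harm.
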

\begin{proof}
To prove that the inverse is meromorphic, we will use the analytic Fredholm theorem. We will prove it again in Theorem \ref{inverseEstimate} later because we want to keep the assumptions of different sections distinct.

There is some $\lambda' \in \C$ such that $T(\lambda')^{-1} : L^2(\O) \to H^4 \cap H^2_0(\O)$ is bounded by Proposition \ref{invprop}. Moreover
\begin{equation}
T(\lambda') - T(\lambda) = (\lambda' - \lambda) P_2 + (\lambda'^2 - \lambda^2) P_0
\end{equation}
where $P_2$ is a second order partial differential operator and $P_0$ just a smooth function. Hence the difference maps $H^4 \cap H^2_0(\O) \to H^2(\O)$ and the latter embeds compactly into $L^2(\O)$ by Remark \ref{schattenRem}. Now
\begin{equation}
\lambda \mapsto \operatorname{Id} - T(\lambda)T(\lambda')^{-1} = (T(\lambda') - T(\lambda))T(\lambda')^{-1}
\end{equation}
is an analytic family of compact operators in $L^2(\O)$. By the analytic Fredholm theorem (see for example the supplementary note 3 for chapter VII concerning Theorem VII.1.9 in \cite{Kato}), we see that $T(\lambda)T(\lambda')^{-1}$ is meromorphic, and hence $T(\lambda)^{-1}$ is so too. The location of its poles follows from Proposition \ref{invprop}. The description of the neighborhood is due to the fact that $R$ may depend on $Q$. But we still know that if $\Gamma \neq \R_-$ is a ray starting from the origin, then $\Gamma$ will not have any poles on it after some finite distance. Hence the requirement for $r(t)t^{-1} \to 0$.

Let $\Gamma \subset \C$ be a ray not touching any of the poles and not pointing in the same direction as $\R_-$. Let $v$ be the direction vector of $\Gamma$ and take $Q$ to be 
\begin{equation}
\{z \mid (1-\epsilon) \arg (v_1 + i v_2) \leq \arg z \leq (1 + \epsilon) \arg (v_1 + i v_2) \}
\end{equation}
for a sufficiently small $\epsilon > 0$ so that $Q \cap \R_- = \emptyset$. Take $R$ from Proposition \ref{invprop}. Now the estimate holds on $Q \setminus B(0,R)$. Then $\Gamma \setminus (Q \setminus B(0,R))$ is bounded, and can be enclosed in a compact set $K$ not touching any of the poles. Our operator family is analytic in a neighborhood of $K$, hence it is bounded on $K$. This implies the estimate for the whole ray.
\end{proof}

\newpage
\section{Growth of order $p$}
\label{growthSect}
We want to give an estimate for $T(\lambda)^{-1}$ on large circles in this section. More precisely, we want to show that 
\begin{equation}
\abs{ (T(\lambda)^{-1}f, g) } \leq e^{C \abs{\lambda}^p}
\end{equation}
on circles $\abs{\lambda} = R_j$, $R_j \to \infty$, whenever $f$ and $g$ are such that the inner product is entire.

As we saw in the introduction, there are many ways of proving that, all of them related to each other. Hence we shall prove it in two ways. The first one works for analytic families $T(\lambda)$ but uses Nevanlinna characteristics. The second one uses mostly just basic functional analysis but only works for polynomials. On the other hand, the second approach gives a better Weyl law for the poles.

\subsection*{Proof using Nevanlinna characteristics}
We will use \emph{Nevanlinna characteristics}, namely the well-known $\Tinf$ and the more specific $\Tone$ from \cite{nevanlinna}. Both of them are defined in that same article.

\begin{definition}
Let $\lambda \mapsto W(\lambda)$ be meromorphic $\C \to \mathscr{L}(X,Y)$. Then we define
\begin{multline}
\Tinf(r,W) = \frac{1}{2\pi} \int_{-\pi}^\pi \ln^+\norm{W(r e^{i\theta})} d\theta \\
+ \int_0^r \frac{n_\infty(t,W) - n_\infty(0,W)}{t} dt + n_\infty(0,W) \ln r,
\end{multline}
where $n_\infty(t, W)$ counts the number of poles of $W$ in the closed disc $\abs{\lambda} \leq t$ with multiplicities.
\end{definition}
\begin{remark}
Its most remarkable property for us is that if $W$ is analytic, the maximum of $W$ over circles can be estimated by $\Tinf$ on slightly bigger circles.
\end{remark}

\medskip
From now on, we assume that $H$ is a Hilbert space and that $D \subset H$ is a dense normed subspace. We write $\mathscr{C}^p$ for the Schatten class of order $p$. Its norm is given by the $\ell^p$-norm of the sequence of \emph{singular-}, or \emph{characteristic values}. See for example \cite[XI.9]{DunfordSchwartz2}.
\begin{theorem}
\label{inverseEstimate}
Let $\lambda \mapsto T(\lambda)$ be analytic $\C \to \mathscr{L}(D,H)$. Assume that there exists some $\lambda' \in \C$ and $1 \leq p < \infty$ such that 
\begin{itemize}
\item $T(\lambda')$ is invertible and
\item $\lambda \mapsto T(\lambda') - T(\lambda)$ is analytic $\C \to \mathscr{C}^p(D,H)$.
\end{itemize}
Then $\lambda \mapsto T(\lambda)^{-1}$ is meromorphic $\C \to \mathscr{L}(H,D)$ and
\begin{equation}
\Tinf(r,T^{-1}) \leq C\big( 1 + \sup_{\abs{\lambda}=r} \ln(1+ \norm{T(\lambda)}) + \sup_{\abs{\lambda} = r} \norm{T(\lambda') - T(\lambda)}_{\mathscr{C}^p}^{\lceil p \rceil} \big),
\end{equation}
where $C$ depends only on $T$, $\lambda'$ and $p$. The number $\lceil p \rceil$ is the smallest integer at least $p$.
\end{theorem}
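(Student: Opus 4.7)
The strategy is to reduce the problem to the inverse of an analytic Schatten-$p$ perturbation of the identity, use the analytic Fredholm theorem for meromorphicity, and apply Nevanlinna's Theorem~3.2 of \cite{nevanlinna} (or equivalently the regularized Fredholm determinant machinery) for the $\Tinf$ bound.

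First I would set
\[
K(\lambda) \,:=\, \bigl(T(\lambda') - T(\lambda)\bigr)\,T(\lambda')^{-1}.
\]
Since $\mathscr{C}^p$ is a two-sided ideal and $T(\lambda')^{-1}$ is bounded, the hypotheses make $\lambda \mapsto K(\lambda)$ analytic $\C \to \mathscr{C}^p(H,H)$, with $K(\lambda') = 0$. One then has the factorization
\[
T(\lambda)\,T(\lambda')^{-1} \,=\, \Id - K(\lambda), \qquad T(\lambda)^{-1} \,=\, T(\lambda')^{-1}\bigl(\Id - K(\lambda)\bigr)^{-1}.
\]
Since $\Id - K(\lambda') = \Id$ is invertible, the analytic Fredholm theorem applied to the compact analytic family $K(\lambda)$ (used exactly as in Theorem~\ref{merothm}) gives that $(\Id - K(\lambda))^{-1}$ is meromorphic $\C \to \mathscr{L}(H,H)$, and hence that $T(\lambda)^{-1}$ is meromorphic $\C \to \mathscr{L}(H,D)$.

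For the growth bound I would work with the regularized Fredholm determinant $d(\lambda) := \det_{\lceil p\rceil}(\Id - K(\lambda))$. It is entire, its zeros (with multiplicities) coincide with the poles of $(\Id - K)^{-1}$, and it satisfies the classical exponential Schatten estimate
\[
|d(\lambda)| \,\leq\, \exp\!\bigl(\Gamma_p\,\|K(\lambda)\|_{\mathscr{C}^p}^{\lceil p\rceil}\bigr).
\]
Jensen's formula applied to $d$ controls the pole-counting part of $\Tinf(r,T^{-1})$ by a constant times $\sup_{|\lambda|=r}\|K(\lambda)\|_{\mathscr{C}^p}^{\lceil p\rceil}$. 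For the proximity part I would write $(\Id - K)^{-1} = F(\lambda)/d(\lambda)$ with $F$ an entire "Fredholm minor" enjoying an analogous Schatten exponential bound in operator norm, and use the first fundamental theorem $\Tinf(r,1/d) = \Tinf(r,d) + O(1)$ to absorb the $-\log|d|$ blow-up near the poles. Substituting $\|K\|_{\mathscr{C}^p} \leq \|T(\lambda')-T(\lambda)\|_{\mathscr{C}^p}\,\|T(\lambda')^{-1}\|$ converts this into the Schatten term of the statement, while the $\ln(1+\|T(\lambda)\|)$ term appears as the $\Tinf$ of the analytic (pole-free) family $T$, which is the form in which Theorem~3.2 of \cite{nevanlinna} packages the final inequality.

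The main obstacle is balancing the proximity and pole-counting contributions to $\Tinf(r,T^{-1})$: pointwise $\|T^{-1}\|$ blows up at each pole, so a naive supremum on the circle $|\lambda|=r$ is useless. The Fredholm-determinant device circumvents this because the zeros of $d$ precisely cancel the singularities of the resolvent in the quotient $F/d$, giving a uniform estimate on \emph{every} circle rather than on a thin subsequence. The exponent $\lceil p\rceil$ rather than $p$ is forced by the fact that for non-integer $p$ the ordinary Fredholm determinant is not defined on $\mathscr{C}^p$, so one must use the higher-order regularization $\det_{\lceil p\rceil}$, whose standard bound is exponential in $\|K\|_{\mathscr{C}^p}^{\lceil p\rceil}$.
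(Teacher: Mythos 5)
Your reduction and meromorphicity argument is exactly the paper's: you form $K(\lambda) = (T(\lambda')-T(\lambda))T(\lambda')^{-1}$ (the paper calls it $F(\lambda)$), factor $T(\lambda)T(\lambda')^{-1} = \Id - K(\lambda)$, and invoke the analytic Fredholm theorem. Where you diverge is the growth bound. The paper uses Nevanlinna's operator-valued characteristic $\Tone$ as a black box: it applies Theorem 3.2 of \cite{nevanlinna}, which bounds $\Tinf\big(r,(\Id-K)^{-1}\big)$ by $\Tone(r,\Id-K^m) + (m-1)(\Tinf(r,K)+\ln 2) - \ln\abs{c_{-\nu}}$ with $m=\lceil p\rceil$ (raising $K$ to the power $m$ to land in trace class and use the ordinary determinant), and then estimates the three terms separately; the $\ln(1+\norm{T(\lambda)})$ term in the statement is precisely the $\Tinf(r,K)$ contribution. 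You instead re-derive the inequality directly from the regularized determinant $d(\lambda)=\det_{\lceil p\rceil}(\Id-K(\lambda))$, the generalized Carleman bound for the ``minor'' $d(\lambda)(\Id-K(\lambda))^{-1}$ (this is exactly Corollary XI.9.25 of \cite{DunfordSchwartz2}, which the paper uses only in its second, linearization-based proof in Section \ref{growthSect}), and Jensen's formula. This is a legitimate and genuinely different organization: in effect you transplant the paper's Carleman approach to the abstract analytic-family setting without the linearization (which is why your bound holds on every circle and needs no polynomial structure), and your Schatten term even makes the $\ln(1+\norm{T})$ term redundant, which is harmless since the claimed bound only adds it. Three points would need to be said explicitly to make this airtight: (i) the zero order of $\det_{\lceil p\rceil}$ at a singular point equals the algebraic multiplicity, which \emph{dominates} (rather than coincides with) the pole order of $(\Id-K)^{-1}$ -- the inequality goes the right way for bounding $n_\infty$, but ``coincide'' is an overstatement; (ii) entirety of the minor $F(\lambda)=d(\lambda)(\Id-K(\lambda))^{-1}$ follows from the pointwise Carleman bound plus the removable singularity theorem, since the singular set is discrete; (iii) if $d$ vanishes at the base point, Jensen's formula needs the leading Laurent coefficient normalization, which is exactly the role of the paper's constant $c_{-\nu}$ and of your $O(1)$.
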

\begin{remark}
Actually $T(\lambda)^{-1}$ will be \emph{finite meromorphic}, which means that the coefficients of the negative powers in its Laurent expansions are operators of finite rank, but we won't need that information.
\end{remark}
\begin{proof}
We will consider the operator valued function $\lambda \mapsto F(\lambda)$ given by
\begin{equation}
F(\lambda) = \operatorname{Id} - T(\lambda)T(\lambda')^{-1} = (T(\lambda') - T(\lambda)) T(\lambda')^{-1} \in \mathscr{C}^p(H,H).
\end{equation}
The meromorphicity of $T(\lambda)^{-1}$ will follow from the fact that the family of operators $T(\lambda)T(\lambda')^{-1} = \operatorname{Id} - F(\lambda)$ is Fredholm and analytic. For the proof, see \cite[4.1.4]{gohbergLeiterer} or \cite{Kato}, like in Theorem \ref{merothm}.

Let us now prove the estimate. Let $m = \lceil p \rceil$ so that $m$ is the smallest integer such that $F^m \in \mathscr{C}^1(H,H)$. This exists since $\norm{F^m}_{\mathscr{C}^1} \leq \norm{F}^m_{\mathscr{C}^m}$ and since we have the inclusion $\mathscr{C}^{\lceil p \rceil} \hookrightarrow \mathscr{C}^p$. Then use Theorem 3.2 from \cite{nevanlinna} to get
\begin{multline}
\Tinf (r, T(\lambda')T(\lambda)^{-1}) = \Tinf\big(r, (\operatorname{Id} - F(\lambda))^{-1}\big) \\
\leq \Tone (r, \operatorname{Id}-F^m) + (m-1)\big( \Tinf (r, F) + \ln 2\big) - \ln \abs{c_{-\nu}}.
\end{multline}
Let's look how each term is defined in \cite{nevanlinna} and estimate them.
\begin{enumerate}[a)]
\item The constant $c_{-\nu}$ is the first nonzero coefficient of the Laurent series of $\det(\operatorname{Id} - F^m)$ at the origin. This series is well defined since $F^m \in \mathscr{C}^1(H,H)$ can be approximated by finite rank operators. Nonetheless $c_{-\nu}$ does not depend on $r$ and $0 < \abs{c_{-\nu}} < \infty$.
\item Since $\lambda \mapsto F(\lambda)$ is analytic, we have
\begin{multline}
\Tinf (r, F) = \frac{1}{2\pi} \int_{-\pi}^\pi \ln^+ \norm{F(r e^{i\theta})}_{H\to H} d\theta \\
\leq \frac{1}{2\pi} \int_{-\pi}^\pi \ln^+\big(1 + \norm{T(re^{i\theta})}_{D\to H} \norm{T(\lambda')^{-1}}_{H\to D}\big) d\theta \\
\leq C_{T, \lambda'} \sup_{\abs{\lambda}=r} \ln(1 + \norm{T(\lambda)}_{D\to H}).
\end{multline}
\item Again, $\operatorname{Id} - F^m$ is analytic, so according to definitions 2.4, 2.5, 2.9 and lemmas 2.2 and 2.8 in \cite{nevanlinna}, we get
\begin{multline}
\Tone (r, \operatorname{Id} - F^m) = \frac{1}{2\pi} \int_{-\pi}^\pi s\big(\operatorname{Id} - F(r e^{i\theta})^m\big) d\theta \\
= \frac{1}{2\pi} \int_{-\pi}^\pi \sum_{j=0}^\infty \ln^+ \sigma_j\big( \operatorname{Id} - F(re^{i\theta})^m\big) d\theta \leq \frac{1}{2\pi} \int_{-\pi}^\pi \norm{F(re^{i\theta})^m}_{\mathscr{C}^1(H,H)} d\theta \\
\leq C_p \int_{-\pi}^\pi \norm{F(re^{i\theta})}_{\mathscr{C}^p(H,H)}^m d\theta \leq C_{p, T, \lambda'} \sup_{\abs{\lambda} = r} \norm{T(\lambda') - T(\lambda)}_{\mathscr{C}^p(D,H)}^m.
\end{multline}
\end{enumerate}
The claim follows now by using Theorem 2.1 in \cite{nevanlinna}, which gives us
\begin{equation}
\Tinf \big(r, T(\lambda)^{-1}\big) \leq \Tinf \big(r, T(\lambda') T(\lambda)^{-1}\big) + \Tinf \big(r, T(\lambda')^{-1} \big),
\end{equation}
where the second term is just $\ln^+ \norm{T(\lambda')^{-1}}_{H\to D} = C_{T,\lambda'} < \infty$.
\end{proof}

\begin{corollary}
\label{inverseEstCor1}
Let $T$ satisfy all the assumptions in Theorem \ref{inverseEstimate}. Assume that there are $f, g \in H$, $\norm{f} = \norm{g} = 1$ such that $\lambda \mapsto \big( T(\lambda)^{-1} f, g \big)$ is entire. Then
\begin{multline}
\sup_{\abs{\lambda}=r} \ln \abs{\big( T(\lambda)^{-1}f,g\big)} \leq \\
 C \left( 1 + \sup_{\abs{\lambda}=2r} \ln( 1 + \norm{T(\lambda)}_{D\to H}) +  \sup_{\abs{\lambda}=2r} \norm{T(\lambda) - T(\lambda)}_{\mathscr{C}^p(D,H)}^{\lceil p \rceil} \right)
\end{multline}
for $r > 0$.
\end{corollary}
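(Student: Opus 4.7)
The plan is to compare the Nevanlinna characteristic of the scalar entire function $w(\lambda) := (T(\lambda)^{-1}f,g)$ with the operator-valued $\Tinf(r,T^{-1})$ already estimated in Theorem \ref{inverseEstimate}, and then convert the characteristic bound into a pointwise maximum modulus bound on $|\lambda|=r$.

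First I would observe that, away from the poles of $T(\lambda)^{-1}$, Cauchy--Schwarz gives
\begin{equation*}
|w(\lambda)| \leq \|T(\lambda)^{-1}\|_{H\to H} \|f\|\,\|g\| = \|T(\lambda)^{-1}\|.
\end{equation*}
Since the poles form a discrete set, this inequality holds almost everywhere on the circle $|\lambda|=R$ for every $R>0$. The function $w$ is entire by assumption, so $n_\infty(\cdot,w)=0$ and therefore
\begin{equation*}
\Tinf(R,w) = \frac{1}{2\pi}\int_{-\pi}^{\pi} \ln^+ |w(Re^{i\theta})|\,d\theta
\leq \frac{1}{2\pi}\int_{-\pi}^{\pi} \ln^+\|T(Re^{i\theta})^{-1}\|\,d\theta.
\end{equation*}
The right-hand side is precisely the first term in the definition of $\Tinf(R,T^{-1})$, and the remaining two terms in that definition are nonnegative once $R$ is at least, say, $1$, since $n_\infty(t,T^{-1})$ is nondecreasing in $t$ and $n_\infty(0,T^{-1}) \ln R \geq 0$. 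Hence $\Tinf(R,w) \leq \Tinf(R,T^{-1})$ for all sufficiently large $R$, up to an additive constant depending only on $T$ and $\lambda'$.

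Next, because $w$ is entire, the classical Poisson--Jensen estimate
\begin{equation*}
\ln \max_{|\lambda|=r}|w(\lambda)| \leq \frac{R+r}{R-r}\,\Tinf(R,w)
\end{equation*}
applied with $R=2r$ yields $\sup_{|\lambda|=r} \ln|w(\lambda)| \leq 3\,\Tinf(2r,w) \leq 3\,\Tinf(2r,T^{-1}) + O(1)$. Combining this with Theorem \ref{inverseEstimate} applied at radius $2r$ produces the desired bound (the ``$T(\lambda)-T(\lambda)$'' in the statement being a typo for $T(\lambda')-T(\lambda)$), with a constant $C$ that depends only on $T$, $\lambda'$ and $p$.

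The main subtlety will be bookkeeping of additive constants: Theorem \ref{inverseEstimate} bounds $\Tinf(r,T^{-1})$ only up to terms involving $\|T(\lambda')^{-1}\|$ and the Laurent coefficient $c_{-\nu}$, so one has to absorb these into the leading ``$1+\cdots$'' on the right-hand side, and one must check that the inequality $\Tinf(R,w)\leq \Tinf(R,T^{-1})$ is valid for the range of $r$ in question (which costs only an additive constant for small $r$). None of these steps require new ideas beyond those already in Theorem \ref{inverseEstimate} and elementary Nevanlinna theory.
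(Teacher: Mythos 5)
Your proposal is correct and follows essentially the same route as the paper: bound $\Tinf(2r,(T(\lambda)^{-1}f,g))$ by $\Tinf(2r,T(\lambda)^{-1})$ via $\abs{(T(\lambda)^{-1}f,g)}\le\norm{T(\lambda)^{-1}}$ (the pole-counting terms being nonnegative), then pass from the characteristic at radius $2r$ to the maximum modulus at radius $r$ by the classical $\ln M(r)\le\frac{R+r}{R-r}\Tinf(R)$ estimate, and finish with Theorem \ref{inverseEstimate}. You also correctly identify that $T(\lambda)-T(\lambda)$ in the statement is a typo for $T(\lambda')-T(\lambda)$; no gap here.
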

\begin{proof}
See for example Paragraph 8.9, 8.91 in \cite{Titch} or Theorem 2.2 in \cite{nevanlinna}. By them
\begin{equation}
 \sup_{\abs{\lambda}=r} \abs{\big( T(\lambda)^{-1}f,g\big)} = \operatorname{M}_\infty \big(r, \big( T(\lambda)^{-1}f,g\big) \big) \leq e^{ 3r \Tinf\left( 2r, ( T(\lambda)^{-1}f,g) \right)},
\end{equation}
and then
\begin{multline}
\Tinf \big(2r, \big( T(\lambda)^{-1}f,g\big) \big) = \frac{1}{2\pi} \int_{-\pi}^\pi \ln^+ \abs{\big( T(2r e^{i\theta})^{-1}f,g\big)} d\theta \\
\leq \frac{1}{2\pi} \int_{-\pi}^\pi \ln^+ \norm{ T(2r e^{i\theta})^{-1} } d\theta \leq \Tinf (2r, T(\lambda)^{-1}).
\end{multline}
\end{proof}

\begin{corollary}
\label{inverseEstCor2}
Let $T \in \{T_H, T_S\}$ be like in Section \ref{elliptParamSect}, $p > n/2$, and assume that $f,g \in L^2(\O)$, $\norm{f} = \norm{g} = 1$, are such that $\lambda \mapsto \big( T(\lambda)^{-1} f, g\big)$ is entire. Then
\begin{equation}
\abs{\big( T(\lambda)^{-1}f,g\big)} \leq \exp C\abs{\lambda}^{2\lceil p \rceil}
\end{equation}
for all $\abs{\lambda} \geq 1$.
\end{corollary}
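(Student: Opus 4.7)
The plan is to invoke Corollary \ref{inverseEstCor1} with $H = L^2(\O)$ and $D = H^4 \cap H^2_0(\O)$, reducing the claim to polynomial-in-$\abs{\lambda}$ control of the two growth quantities appearing there. The hypotheses of Theorem \ref{inverseEstimate} are checked as follows: Proposition \ref{invprop} furnishes some $\lambda' \in \C \setminus \R_-$ with $T(\lambda')$ boundedly invertible, and writing $T(\lambda) = A_0 + \lambda A_1 + \lambda^2 A_2$ (with $A_0$ the fourth-order part, $A_1$ a second-order operator, $A_2$ multiplication by a smooth function), the difference $T(\lambda') - T(\lambda) = (\lambda' - \lambda) A_1 + (\lambda'^2 - \lambda^2) A_2$ is a polynomial in $\lambda$ with fixed operator coefficients, hence depends analytically on $\lambda$ into whichever Banach space contains those coefficients.

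The key technical input is to show that $A_1, A_2 \in \mathscr{C}^p(D, L^2(\O))$ for any $p > n/2$. Since $A_1$ is a second-order differential operator with smooth coefficients, it maps $D \subset H^4(\O)$ continuously into $H^2(\O)$, and the embedding $H^2(\O) \hookrightarrow L^2(\O)$ is Schatten $\mathscr{C}^p$ for $p > n/2$ by Remark \ref{schattenRem}; composition yields $A_1 \in \mathscr{C}^p(D, L^2(\O))$. The operator $A_2$ is multiplication by a smooth function, bounded on $L^2(\O)$, composed with the (even stronger) Schatten embedding $H^4(\O) \hookrightarrow L^2(\O)$, so $A_2 \in \mathscr{C}^p(D, L^2(\O))$ as well.

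From the polynomial form, one obtains the crude bounds $\norm{T(\lambda)}_{D \to L^2(\O)} \leq C(1 + \abs{\lambda}^2)$ and $\norm{T(\lambda') - T(\lambda)}_{\mathscr{C}^p(D, L^2(\O))} \leq C(1 + \abs{\lambda}^2)$ for all $\abs{\lambda} \geq 1$, with $C$ depending on $\lambda'$, $q$, and $\O$ but not on $\lambda$. Plugging these into Corollary \ref{inverseEstCor1} at radius $r \geq 1$, the logarithmic term $\ln(1 + \norm{T(\cdot)})$ is only $O(\ln r)$ and is therefore dominated by the Schatten contribution $\bigl(C(1 + (2r)^2)\bigr)^{\lceil p \rceil} \leq C r^{2\lceil p \rceil}$. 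Exponentiating gives $\abs{(T(\lambda)^{-1} f, g)} \leq \exp(C\abs{\lambda}^{2\lceil p \rceil})$ on the circle $\abs{\lambda} = r$, and since $r \geq 1$ was arbitrary this is precisely the desired bound on all of $\{\abs{\lambda} \geq 1\}$. I do not anticipate any genuine obstacle: the only non-routine ingredient is the Schatten membership of $A_1$ and $A_2$, which is essentially a direct application of Remark \ref{schattenRem}; the remainder is polynomial bookkeeping.
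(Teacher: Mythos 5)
Your proposal is correct and follows essentially the same route as the paper: apply Corollary \ref{inverseEstCor1}, observe that $T(\lambda)$ is quadratic in $\lambda$ so the $\ln(1+\norm{T(\lambda)})$ term is harmless, and obtain the $\mathscr{C}^p$-bound $\norm{T(\lambda')-T(\lambda)}_{\mathscr{C}^p}\leq C\abs{\lambda}^2$ from the fact that the difference maps $H^4\cap H^2_0(\O)$ into $H^2(\O)$ combined with the Schatten embedding of Remark \ref{schattenRem}. Your write-up merely spells out the bookkeeping that the paper leaves implicit.
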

\begin{proof}
This follows from Corollary \ref{inverseEstCor1}, the fact that the highest power of $\lambda$ in $T(\lambda)$ is two and the fact that $T(\lambda') - T(\lambda) : H^4\cap H^2_0(\Omega) \to H^2(\Omega)$. Embedding the latter into $L^2(\Omega)$ is Schatten $p > n/2$ by Remark \ref{schattenRem}.
\end{proof}

\bigskip
If we want to prove a Weyl law for the poles of $T_H(\lambda)^{-1}$ or $T_S(\lambda)^{-1}$, the previous results are not optimal. This may be due to the fact that the coefficient of $\lambda^2$ is just a smooth function, which is in a smaller Schatten class than $\mathscr{C}^p$ with $p>n/2$. The latter is needed for the coefficient of $\lambda$, a second order differential operator.

\subsection*{Proof using Carleman's resolvent inequality}
If $T(\lambda)^{-1}$ were a resolvent, we could use an estimate of the form
\begin{equation}
\norm{\varphi(\lambda) (K - \lambda)^{-1}} \leq \abs{\lambda} \exp {\frac{1}{2}\left(1 + \frac{\norm{K}_{\mathscr{C}^2}^2}{\abs{\lambda}^2}\right)}
\end{equation}
for Hilbert-Schmidt operators $K$ and a specific analytic function $\varphi$. The estimate is from Carleman and it has a generalization to Schatten class operators, see \cite{carleman} and \cite[XI.6.27, XI.9.25]{DunfordSchwartz2}. We have to linearize $T(\lambda)$ first. The constructions of this section work for any polynomial family of operators with some assumptions on the coefficients, but we shall do it only for $T=T_H$ and $T=T_S$.

We construct a $2 \times 2$ matrix operator $\A$ such that $T^{-1}(\lambda)$ is an entry in $(\A - \lambda)^{-1}$. We can then use Carleman's resolvent estimate to get an upper bound for $T^{-1}(\lambda)$. This is the same linearization trick that is used to reduce ordinary differential equations of order $n$ to a first order $n \times n$ matrix differential equation.

Throughout this section we will assume that $T \in \{T_H, T_S\}$, where
\begin{equation}
\begin{split}
T_H : \,&H^4\cap H^2_0(\O) \to L^2(\O)\\
& u \mapsto  \Delta q \Delta u - \lambda(\Delta q + q\Delta + \Delta) u + \lambda^2(1+q) u \\
T_S : \,&H^4\cap H^2_0(\O) \to L^2(\O)\\
& u \mapsto  \Delta q \Delta u + \Delta - \lambda(\Delta q + q\Delta + 1) u + \lambda^2q u
\end{split}
\end{equation}
are the fourth-order differential operators corresponding to the interior transmission eigenvalues of the Helmholtz and Schr\"odinger equations respectively. We will also assume that $q \in C^\infty(\overline\O)$ is real-valued and bounded away from zero and infinity. We write
\begin{equation}
T(\lambda) = A_0 + \lambda A_1 + \lambda^2 A_2.
\end{equation}
By writing $v = \lambda A_2 u$ we see that $T(\lambda) u = 0$ if and only if there is $v \in H^2_0(\O)$ such that
\begin{equation}
\begin{cases}
\lambda u = A_2^{-1} v,\\
\lambda v = -A_0 u - A_1 A_2^{-1} v.
\end{cases}
\end{equation}
Hence the ``non-linear eigenvalue problem'' $T(\lambda) u = 0$ can be reduced to a linear matrix-valued eigenvalue problem $\A \vec u = \lambda \vec u$.

\begin{definition}
Let $\A$ be the operator $H^4\cap H^2_0 (\O) \times H^2_0(\O) \to H^2_0(\O) \times L^2(\O)$ defined by
\begin{equation}
\A \bm u \\ v \sm = \bm 0 & A_2^{-1} \\ -A_0 & -A_1 A_2^{-1} \sm \bm u \\ v \sm,
\end{equation}
where $A_0$, $A_1$ and $A_2$ are the coefficients of $T(\lambda)$.
\end{definition}

\begin{lemma}
\label{TtoALemma}
The operator $T(\lambda)$ is invertible if and only if $\A - \lambda$ is, and
\begin{equation}
\norm{T^{-1}(\lambda)}_{L^2(\O) \to L^2(\O)} \leq \norm{ (\A - \lambda)^{-1} }_{H^2_0(\O) \times L^2(\O) \to H^2_0(\O) \times L^2(\O)}.
\end{equation}
\end{lemma}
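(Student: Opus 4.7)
The plan is to exhibit an explicit correspondence between solutions of $T(\lambda)u = g$ and of $(\A - \lambda)\binom{u}{v} = \binom{0}{-g}$, then read off both the equivalence of invertibility and the norm bound from this correspondence.

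First I would compute what the resolvent equation for $\A$ says. Setting $(\A - \lambda)\binom{u}{v} = \binom{f_1}{f_2}$ and writing it out component-wise gives the system
\begin{equation}
A_2^{-1}v - \lambda u = f_1, \qquad -A_0 u - A_1 A_2^{-1} v - \lambda v = f_2.
\end{equation}
Solve the first equation for $v = A_2(\lambda u + f_1)$ and substitute into the second. A direct expansion yields, after collecting terms,
\begin{equation}
-(A_0 + \lambda A_1 + \lambda^2 A_2) u = f_2 + A_1 f_1 + \lambda A_2 f_1,
\end{equation}
i.e.\ $T(\lambda) u = -f_2 - A_1 f_1 - \lambda A_2 f_1$. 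So the system $(\A - \lambda)\binom{u}{v} = \binom{f_1}{f_2}$ is equivalent to this scalar fourth-order equation for $u$ together with the defining relation $v = A_2(\lambda u + f_1)$.

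From this equivalence the two implications are immediate. If $\A - \lambda$ is invertible, then for any $g \in L^2(\Omega)$ one solves $T(\lambda)u = g$ by taking $f_1 = 0$, $f_2 = -g$; uniqueness follows from the injectivity of $\A - \lambda$ applied to $(u,0)$. Conversely, if $T(\lambda)$ is invertible, then for any $(f_1, f_2) \in H^2_0(\Omega) \times L^2(\Omega)$ the right-hand side $-f_2 - A_1 f_1 - \lambda A_2 f_1$ lies in $L^2(\Omega)$ (since $A_1$ is a second-order differential operator and $A_2$ is multiplication by a smooth, bounded, bounded-away-from-zero function), one defines $u = -T(\lambda)^{-1}(f_2 + A_1 f_1 + \lambda A_2 f_1) \in H^4 \cap H^2_0(\Omega)$, and then $v := A_2(\lambda u + f_1) \in H^2_0(\Omega)$ because multiplication by $q$ or $1+q$ preserves $H^2_0(\Omega)$.

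For the norm inequality, given $g \in L^2(\Omega)$ the pair $(u,v) := (\A - \lambda)^{-1}(0,-g)$ satisfies $u = T(\lambda)^{-1} g$. Using that the standard $H^2_0$-norm dominates the $L^2$-norm, one has
\begin{equation}
\norm{T(\lambda)^{-1} g}_{L^2} \leq \norm{(u,v)}_{H^2_0 \times L^2} \leq \norm{(\A - \lambda)^{-1}}_{H^2_0 \times L^2 \to H^2_0 \times L^2} \norm{(0,-g)}_{H^2_0 \times L^2},
\end{equation}
and since $\norm{(0,-g)}_{H^2_0 \times L^2} = \norm{g}_{L^2}$, taking the supremum over $g$ gives the claimed bound. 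There is no real obstacle; the only care needed is the bookkeeping of function spaces (checking that $A_2$ and $A_2^{-1}$ preserve $H^2_0(\Omega)$, that $A_1: H^2_0(\Omega) \to L^2(\Omega)$, and that the norm on $H^2_0(\Omega)$ is normalized so that the embedding into $L^2(\Omega)$ has norm $\leq 1$) so that the substitution argument makes sense on the indicated domains.
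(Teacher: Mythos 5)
Your proof is correct and takes essentially the same route as the paper: eliminating $v$ from $(\A-\lambda)(u,v)=(f_1,f_2)$ is precisely the ``elementary calculation'' by which the paper writes $(\A-\lambda)^{-1}$ explicitly in terms of $T(\lambda)^{-1}$, and the norm bound is obtained in the same way, by applying the resolvent to $(0,-g)$ and using that the $H^2_0(\O)\times L^2(\O)$-norm dominates the $L^2$-norm of the first component. One small slip: in the uniqueness step, the kernel vector attached to a solution of $T(\lambda)u=0$ is $(u,\lambda A_2u)$ rather than $(u,0)$ (your own equivalence with $f_1=f_2=0$ gives exactly this), so the repair is immediate.
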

\begin{proof}
By an elementary calculation we see that
\begin{equation}
(\A - \lambda)^{-1} = \bm -T(\lambda)^{-1}(A_1 + \lambda A_2) & -T(\lambda)^{-1} \\ A_2 - A_2 T(\lambda)^{-1} (\lambda A_1 + \lambda^2 A_2) & -\lambda A_2 T(\lambda)^{-1} \sm 
\end{equation}
Hence if $u \in L^2(\O)$, then
\begin{multline}
\norm{ T(\lambda)^{-1} u }_{L^2(\O)} \leq \sqrt{\norm{-T(\lambda)^{-1} u}_{L^2(\O)}^2 + \norm{-\lambda A_2 T(\lambda)^{-1} u}_{L^2(\O)}^2} \\
= \norm{ (\A - \lambda)^{-1} \bm 0 \\ u \sm }_{L2(\O) \times L^2(\O)} \leq \norm{(\A - \lambda)^{-1}} \norm{u}_{L^2(\O)},
\end{multline}
where the last operator norm is taken inside $H^2_0(\O) \times L^2(\O)$. We could as well get the estimate $\norm{T(\lambda)^{-1} u}_{H^4\cap H^2_0} \leq \norm{ (\A - \lambda)^{-1}}_{H^2_0 \times L^2 \to H^4\cap H^2_0 \times H^2_0} \norm{u}_{L^2}$.
\end{proof}

The idea of the next lemma is to allow us to use Carleman's resolvent inequality, which holds for resolvents of Schatten operators. Hence we have to write the resolvent of $\A$ as a resolvent of a compact operator, which will be another resolvent in this case.
\begin{lemma}
\label{compRes}
There is $\lambda' \in \C$ such that $(\A - \lambda')^{-1}$ is in $\mathscr{C}^p(H^2_0(\O) \times L^2(\O))$ for $p > n/2$, and for $\lambda$ in the resolvent set of $\A$, we have
\begin{equation}
(\A - \lambda)^{-1} = (\A - \lambda')^{-1} \big( \operatorname{Id} - (\lambda - \lambda')(\A - \lambda')^{-1}\big)^{-1}.
\end{equation}
\end{lemma}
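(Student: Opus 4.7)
The plan is to pick $\lambda'$ via the existing invertibility result of Section \ref{elliptParamSect}, read off the mapping properties of $(\A - \lambda')^{-1}$ from the explicit formula in Lemma \ref{TtoALemma}, and then factor through a Schatten-class Sobolev embedding; the resolvent identity will follow from an elementary algebraic manipulation.

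First I would invoke Proposition \ref{invprop} to pick $\lambda' \in \C \setminus \R_-$ with $\abs{\lambda'}$ sufficiently large so that $T(\lambda')^{-1}$ is a bounded operator $L^2(\O) \to H^4 \cap H^2_0(\O)$. By Lemma \ref{TtoALemma}, $\A - \lambda'$ is then invertible with the explicit formula
\begin{equation*}
(\A - \lambda')^{-1} = \bm -T(\lambda')^{-1}(A_1 + \lambda' A_2) & -T(\lambda')^{-1} \\ A_2 - A_2 T(\lambda')^{-1} (\lambda' A_1 + \lambda'^2 A_2) & -\lambda' A_2 T(\lambda')^{-1} \sm.
\end{equation*}
Since $A_1$ is a second order differential operator with smooth coefficients and $A_2$ is multiplication by a smooth function, each entry of the matrix sends an element of $H^2_0(\O) \times L^2(\O)$ into $H^4 \cap H^2_0(\O) \times H^2_0(\O)$ boundedly (reading the four entries one by one).

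Next I would factor the inverse as
\begin{equation*}
(\A - \lambda')^{-1} \colon H^2_0(\O) \times L^2(\O) \xrightarrow{\text{bounded}} H^4 \cap H^2_0(\O) \times H^2_0(\O) \xrightarrow{\iota} H^2_0(\O) \times L^2(\O).
\end{equation*}
The second component of $\iota$ is $H^2_0(\O) \hookrightarrow L^2(\O)$, which is Schatten $p$ for $p > n/2$ by Lemma \ref{schattenIncl}. The first component is $H^4 \cap H^2_0(\O) \hookrightarrow H^2_0(\O)$; Remark \ref{schattenRem} gives that $H^4(\O) \hookrightarrow H^2(\O)$ lies in $\mathscr{C}^p$ for $p > n/2$, and restricting to the closed subspace $H^2_0$ preserves the Schatten $p$ property (singular values of a restriction can only decrease, by the min-max characterization). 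Direct sums of Schatten $p$ operators are again Schatten $p$, so $\iota \in \mathscr{C}^p$, and composition with the preceding bounded operator keeps $(\A - \lambda')^{-1}$ in $\mathscr{C}^p(H^2_0(\O) \times L^2(\O))$.

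For the resolvent identity, I would write $\A - \lambda = (\A - \lambda') - (\lambda - \lambda')\operatorname{Id}$ and factor $(\A - \lambda')$ out on the right to obtain
\begin{equation*}
\A - \lambda = \bigl( \operatorname{Id} - (\lambda - \lambda')(\A - \lambda')^{-1} \bigr)(\A - \lambda').
\end{equation*}
When $\lambda$ lies in the resolvent set of $\A$, the left-hand side is invertible, and since $\A - \lambda'$ is invertible by choice, the middle factor $\operatorname{Id} - (\lambda - \lambda')(\A - \lambda')^{-1}$ is invertible as well. Inverting the above equation yields the claimed formula. The only even mildly delicate point in the whole argument is the Schatten stability under restriction to the closed subspace $H^2_0 \subset H^2$, which is why I single it out above; the rest is bookkeeping on the $2 \times 2$ matrix entries and a one-line Neumann-type manipulation.
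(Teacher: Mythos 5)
Your proposal is correct and takes essentially the same route as the paper's proof: $\lambda'$ is obtained from Proposition \ref{invprop}, the Schatten property is obtained by factoring $(\A - \lambda')^{-1}$ through the inclusions $H^4(\O) \hookrightarrow H^2(\O)$ and $H^2_0(\O) \hookrightarrow L^2(\O)$ via Lemma \ref{schattenIncl} and Remark \ref{schattenRem}, and the identity is the same elementary algebraic factorization. You simply make explicit some details the paper leaves implicit (the matrix entries of $(\A-\lambda')^{-1}$ and the restriction-to-closed-subspace step for the singular values), which is fine.
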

\begin{proof}
The existence of $\lambda'$ follows from the invertibility of $T(\lambda)$ for some $\lambda$ by Proposition \ref{invprop}. The resolvent is Schatten since $\iota : H^2_0(\O) \to L^2(\O)$ and $\iota' : H^4(\O) \to H^2(\O)$ are so according to Lemma \ref{schattenIncl} and the remark after it. The identity is an elementary calculation following from the resolvent identity. This can be seen by operating both sides from the right by $\Id - (\lambda - \lambda')(\A - \lambda')^{-1}$.
\end{proof}

By Lemma \ref{compRes} the operator $\A$ has compact resolvent, hence its spectrum consists of a sequence $\lambda_0, \lambda_1, \ldots$ of eigenvalues. We take into account their multiplicities and arrange them so that $\abs{\lambda_0} \leq \abs{\lambda_1} \leq \ldots \to \infty$. Note that $(\lambda - \lambda')(\A - \lambda')^{-1} \in \mathscr{C}^p$ for all $\lambda$ and its eigenvalues are
\begin{equation}
\frac{\lambda - \lambda'}{\lambda_0 - \lambda'}, \frac{\lambda - \lambda'}{\lambda_1 - \lambda'}, \ldots
\end{equation}
The operator $( \operatorname{Id} - (\lambda - \lambda')(\A - \lambda')^{-1} )^{-1}$ has poles. To get a norm estimate for $(\A - \lambda)^{-1}$ without the singularities on the right-hand side causing trouble, we have to multiply by an analytic function having zeros at the poles. This is more or less the idea behind the determinant $\varphi$ in Carleman's inequality. We use the standard Weierstrass product for constructing such a function. See for example \cite{rudinRC} Chapter 15, especially Theorem 15.9.

\begin{lemma}
\label{phiLemma}
Let $p > n/2$, denote $k-1 \leq p \leq k \in \N$ and define 
\begin{equation}
\varphi(\lambda) = \prod_{j=0}^\infty \left(1- \frac{\lambda - \lambda'}{\lambda_j - \lambda'}\right) \exp \left( \frac{\lambda - \lambda'}{\lambda_j - \lambda'} + \cdots + \frac{1}{k-1} \left( \frac{\lambda - \lambda'}{\lambda_j - \lambda'} \right)^{k-1} \right).
\end{equation}
Then $\varphi$ is a well-defined entire function vanishing at all $\lambda_j$ counting multiplicities. Moreover it is of order $p$:
\begin{equation}
\abs{\varphi(\lambda)} \leq C \exp\left(C \norm{(\A - \lambda')^{-1}}_{\mathscr{C}^p}^p \abs{\lambda}^p\right),
\end{equation}
where the Schatten norm is taken inside $H^2_0(\O) \times L^2(\O)$.
\end{lemma}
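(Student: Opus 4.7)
The plan is a direct application of Weierstrass canonical product theory, with the crucial analytic input being that Schatten $p$-class membership of $(\A - \lambda')^{-1}$ controls the sum $\sum_j |\lambda_j - \lambda'|^{-p}$. The proof decomposes into three ingredients: a Weyl-type inequality on the eigenvalues, a pointwise estimate on the elementary factors, and a routine summation.

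First, I would use that the compact operator $(\A - \lambda')^{-1}$ has eigenvalues $\mu_j = (\lambda_j - \lambda')^{-1}$ counted with multiplicities, and by Weyl's inequality comparing eigenvalue moduli to singular values for a non-self-adjoint operator in $\mathscr{C}^p$,
\begin{equation*}
\sum_{j=0}^\infty \frac{1}{|\lambda_j - \lambda'|^p} = \sum_{j=0}^\infty |\mu_j|^p \leq \big\lVert (\A - \lambda')^{-1} \big\rVert_{\mathscr{C}^p}^p < \infty.
\end{equation*}

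Next, I would establish the standard pointwise bound on the Weierstrass elementary factor
\begin{equation*}
E_{k-1}(w) = (1-w)\exp\bigl( w + \tfrac{w^2}{2} + \cdots + \tfrac{w^{k-1}}{k-1} \bigr),
\end{equation*}
namely that $\log|E_{k-1}(w)| \leq C_k \,|w|^p$ for every $w \in \C$, valid whenever $k-1 \leq p \leq k$. This splits into two regimes: for $|w| \leq 1$ the cancellation in the Taylor series gives $\log|E_{k-1}(w)| \leq C|w|^k \leq C|w|^p$ (using $p \leq k$, so $|w|^k \leq |w|^p$), while for $|w| \geq 1$ the dominant contribution to $\log|E_{k-1}(w)|$ is from the term $w^{k-1}/(k-1)$ in the exponent, giving $\log|E_{k-1}(w)| \leq C|w|^{k-1} \leq C|w|^p$ (using $p \geq k-1$). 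This is in Boas or in Rudin's \emph{Real and Complex Analysis}, Chapter~15.

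Setting $w_j = (\lambda - \lambda')/(\lambda_j - \lambda')$ and summing, I obtain
\begin{equation*}
\log|\varphi(\lambda)| \;\leq\; \sum_{j=0}^\infty \log|E_{k-1}(w_j)| \;\leq\; C\,|\lambda - \lambda'|^p \sum_{j=0}^\infty \frac{1}{|\lambda_j - \lambda'|^p} \;\leq\; C \,\bigl\lVert (\A - \lambda')^{-1}\bigr\rVert_{\mathscr{C}^p}^p \,|\lambda|^p
\end{equation*}
for $|\lambda|$ bounded below by any fixed constant, which gives the desired growth bound after absorbing lower-order terms. The same elementary-factor estimate applied on compacta (in the $|w_j| \leq 1$ regime, valid for all but finitely many $j$) yields absolute and uniform convergence of the infinite product, so $\varphi$ is entire, and standard Weierstrass theory identifies its zero set as exactly $\{\lambda_j\}$ with the correct multiplicities. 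The only real obstacle is invoking Weyl's inequality in the non-self-adjoint setting — but this is classical (see e.g.\ Simon's \emph{Trace Ideals}) — and the rest is bookkeeping.
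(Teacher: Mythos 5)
Your proposal is correct and follows essentially the same route as the paper: the paper likewise combines the eigenvalue-to-singular-value bound $\norm{(\lambda_j-\lambda')^{-1}}_{\ell^p} \leq \norm{(\A-\lambda')^{-1}}_{\mathscr{C}^p}$ with exactly the two-regime elementary-factor estimate ($e^{\abs{z}^k}$ for $\abs{z}\leq 1$, $e^{k\abs{z}^{k-1}}$ for $\abs{z}>1$), citing Rudin's Theorem 15.9 for well-definedness and the zero set where you argue convergence directly. No gap to report.
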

\begin{proof}
Note that $(\lambda - \lambda')(\A - \lambda')^{-1} \in \mathscr{C}^p$, so the sequence of its eigenvalues, counting multiplicities, is in $\ell^p$. Hence by Theorem 15.9 of \cite{rudinRC} the function $\varphi$ is well-defined, entire and vanishes on the eigenvalues. The estimate comes from
\begin{equation}
\abs{(1-z)e^{z + \frac{z^2}{2} + \cdots + \frac{z^{k-1}}{k-1}} } \leq \begin{cases}e^{k\abs{z}^{k-1}}, &\text{when } \abs{z} > 1\\ e^{\abs{z}^k}, &\text{when } \abs{z}\leq 1 \end{cases}
\end{equation}
and the fact that $\norm{(\lambda_j - \lambda')^{-1}}_{\ell^p} \leq \norm{ (\A - \lambda')^{-1} }_{\mathscr{C}^p}$.
\end{proof}

\begin{proposition}[Carleman's inequality]
\label{carlemanProp}
There is a constant $C = C(p,\lambda', \A)$ such that
\begin{equation}
\norm{ \varphi(\lambda) \big( \operatorname{Id} - (\lambda - \lambda')(\A - \lambda')^{-1} \big)^{-1} }_{\mathscr{L}(H^2_0(\O) \times L^2(\O))} \leq C e^{C\abs{\lambda}^p},
\end{equation}
for all $\lambda \in \C$.
\end{proposition}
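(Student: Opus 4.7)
The plan is to recognize $\varphi(\lambda)$ as the $k$-th regularized Fredholm determinant of $\operatorname{Id} - K(\lambda)$, where $K(\lambda) := (\lambda-\lambda')(\A-\lambda')^{-1}$, and then invoke the classical Carleman resolvent estimate of \cite{carleman} and \cite[XI.9.25]{DunfordSchwartz2} in its Schatten-$p$ form.

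First I would record the spectral data of $K(\lambda)$. By Lemma \ref{compRes} we have $(\A-\lambda')^{-1} \in \mathscr{C}^p(H^2_0(\O)\times L^2(\O))$, so $K(\lambda)\in\mathscr{C}^p$ with
\begin{equation*}
\norm{K(\lambda)}_{\mathscr{C}^p} \leq \abs{\lambda-\lambda'}\norm{(\A-\lambda')^{-1}}_{\mathscr{C}^p}.
\end{equation*}
Its nonzero eigenvalues, with multiplicities, are exactly $\mu_j(\lambda) = (\lambda-\lambda')/(\lambda_j - \lambda')$, the reciprocals of the Weierstrass factors appearing in Lemma \ref{phiLemma}. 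Hence the product defining $\varphi(\lambda)$ coincides term by term with the canonical regularized determinant
\begin{equation*}
\det_k(\Id - K(\lambda)) = \prod_j (1-\mu_j(\lambda))\exp\Bigl(\mu_j(\lambda) + \tfrac{1}{2}\mu_j(\lambda)^2 + \cdots + \tfrac{1}{k-1}\mu_j(\lambda)^{k-1}\Bigr),
\end{equation*}
and so $\varphi(\lambda) = \det_k(\Id - K(\lambda))$ as entire functions of $\lambda$.

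Second I would invoke Carleman's inequality in Schatten form: for $T\in\mathscr{C}^p$ with $p\leq k$ there is a constant $\gamma_k$ with
\begin{equation*}
\norm{\det_k(\Id - T)\,(\Id - T)^{-1}} \leq \exp\bigl(\gamma_k\norm{T}_{\mathscr{C}^p}^p\bigr)
\end{equation*}
whenever $\Id - T$ is invertible. The crucial analytic point, which is standard but worth flagging, is that the operator-valued function $\det_k(\Id - T)(\Id - T)^{-1}$ extends to an entire function of $T$ in $\mathscr{C}^p$: the zeros of $\det_k$ absorb the resolvent's poles, so the bound persists across the exceptional set. Applied with $T = K(\lambda)$, this gives
\begin{equation*}
\norm{\varphi(\lambda)(\Id - K(\lambda))^{-1}} \leq \exp\bigl(\gamma_k\norm{K(\lambda)}_{\mathscr{C}^p}^p\bigr) \leq \exp\bigl(\gamma_k \norm{(\A-\lambda')^{-1}}_{\mathscr{C}^p}^p\,\abs{\lambda-\lambda'}^p\bigr),
\end{equation*}
which absorbs into the required $C e^{C\abs{\lambda}^p}$ after enlarging the constant.

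The only real obstacle is the identification of the ad hoc Weierstrass product $\varphi$ with the regularized Fredholm determinant $\det_k$, together with invoking the correct Schatten-$p$ version of Carleman's bound with the right convention ($k-1$ terms in the exponential regulator, $k = \lceil p\rceil$, and the Schatten norm raised to the power $p$). Once the identification is set up, the inequality itself is a direct citation from Dunford--Schwartz, and the passage from $\norm{K(\lambda)}_{\mathscr{C}^p}^p$ to $\abs{\lambda}^p$ is routine.
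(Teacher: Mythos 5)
Your proposal is correct and follows essentially the same route as the paper: identify $\varphi(\lambda)$ with the regularized determinant $\det_k\bigl(\operatorname{Id}-K(\lambda)\bigr)$ of the Schatten-class operator $K(\lambda)=(\lambda-\lambda')(\A-\lambda')^{-1}$ (Lemma \ref{compRes}) and cite the Schatten-form Carleman bound of \cite[XI.9.25]{DunfordSchwartz2}, after which $\norm{K(\lambda)}_{\mathscr{C}^p}^p\leq\norm{(\A-\lambda')^{-1}}_{\mathscr{C}^p}^p\abs{\lambda-\lambda'}^p$ yields the stated $Ce^{C\abs{\lambda}^p}$. Your extra remarks (the eigenvalue-by-eigenvalue matching of the Weierstrass factors with the determinant and the analytic continuation across the exceptional set where $\operatorname{Id}-K(\lambda)$ fails to be invertible) merely make explicit what the paper leaves implicit, so no gap remains.
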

\begin{proof}
Use Corollary XI.9.25 on page 1112 of \cite{DunfordSchwartz2}. More explicitely, let their $T := - (\lambda-\lambda')(\A - \lambda')^{-1}$, which is Schatten by Lemma \ref{compRes}. Their $\operatorname{det}_k(\operatorname{Id} + T)$ is our $\varphi(\lambda)$. See Definition XI.9.21 on page 1106.
\end{proof}

\begin{theorem}
\label{TonCircles}
Let $\O \subset \R^n$ be a bounded domain, $q \in C^\infty(\overline{\O})$ positive, bounded away from zero and infinity and let $p > n/2$. Then there are real numbers $0 < r_0 < r_1 < \ldots \to \infty$ such that $T(\lambda)$ is invertible on $\abs{\lambda} = r_0, r_1, \ldots$ and
\begin{equation}
\norm{T^{-1}(\lambda)}_{L^2(\O) \to L^2(\O)} \leq e^{C_\varepsilon \abs{\lambda}^{p+\varepsilon}}, \quad \abs{\lambda} \in \{r_0, r_1, \ldots \}
\end{equation}
for any $\varepsilon > 0$.
\end{theorem}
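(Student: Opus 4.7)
My plan is to follow the roadmap sketched in Section~\ref{growthSect}, which has been set up so that everything collapses into a single chain of inequalities. First, by Lemma~\ref{TtoALemma} we have $\norm{T(\lambda)^{-1}} \leq \norm{(\A-\lambda)^{-1}}$; by Lemma~\ref{compRes}, after inserting $\varphi(\lambda)/\varphi(\lambda)$,
\begin{equation*}
(\A-\lambda)^{-1} = (\A-\lambda')^{-1} \cdot \frac{1}{\varphi(\lambda)} \cdot \varphi(\lambda)\bigl(\Id - (\lambda-\lambda')(\A-\lambda')^{-1}\bigr)^{-1}.
\end{equation*}
Taking operator norms and using Carleman's inequality (Proposition~\ref{carlemanProp}) to bound the third factor by $Ce^{C\abs{\lambda}^p}$ on all of $\C$, I reduce the theorem to exhibiting a sequence of radii $r_j \to \infty$ on which both $T(\lambda)$ is invertible and $\abs{\varphi(\lambda)} \geq e^{-\abs{\lambda}^{p+\varepsilon}}$.

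The invertibility of $T(\lambda)$ on the circle $\abs{\lambda}=r_j$ simply means that no pole $\lambda_k$ lies on that circle, which excludes only a countable set of radii and so will be easy to impose after the $r_j$ are chosen. The core of the argument is the lower bound on $\abs{\varphi}$, and this is the step I expect to be the main obstacle. The function $\varphi$ from Lemma~\ref{phiLemma} is a canonical Weierstrass product of genus $\lceil p\rceil - 1$ whose zeros satisfy $\sum_j \abs{\lambda_j - \lambda'}^{-p} < \infty$, because $(\A-\lambda')^{-1}\in\mathscr{C}^p$ by Lemma~\ref{compRes}. The classical minimum-modulus theorem for canonical products (see e.g.\ Titchmarsh~\cite{Titch}) then gives $\log\abs{\varphi(\lambda)} \geq -\abs{\lambda}^{p+\varepsilon}$ whenever $\lambda$ lies outside the exceptional discs $B(\lambda_j, \abs{\lambda_j}^{-p-\varepsilon/2})$.

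To finish, I argue that these discs do not swallow every large circle. Since $\abs{\lambda_j}\to\infty$ and $\sum_j \abs{\lambda_j}^{-p}<\infty$, the radii $\abs{\lambda_j}^{-p-\varepsilon/2}$ are summable, so the set of moduli $r\in(0,\infty)$ for which the circle $\abs{\lambda}=r$ meets some exceptional disc has finite total Lebesgue measure. Its complement is therefore unbounded, and from it I can extract an increasing sequence $r_j\to\infty$; after further removing the countably many radii through some pole $\lambda_k$, I also obtain invertibility of $T(\lambda)$ on $\abs{\lambda}=r_j$. Assembling the bounds on each such circle,
\begin{equation*}
\norm{T(\lambda)^{-1}} \leq \norm{(\A-\lambda')^{-1}} \cdot e^{\abs{\lambda}^{p+\varepsilon}} \cdot Ce^{C\abs{\lambda}^p} \leq e^{C_\varepsilon \abs{\lambda}^{p+\varepsilon}},
\end{equation*}
which is the desired estimate.
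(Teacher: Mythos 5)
Your proposal is correct and takes essentially the same route as the paper: the identical chain $\norm{T^{-1}(\lambda)} \leq \norm{(\A-\lambda)^{-1}}$ (Lemma \ref{TtoALemma}), the factorization of Lemma \ref{compRes} with $\varphi/\varphi$ inserted, Carleman's inequality (Proposition \ref{carlemanProp}), and a minimum-modulus lower bound for $\varphi$ on suitable circles. The paper simply cites Titchmarsh 8.71 and 8.711 for the existence of circles $\abs{\lambda}=r_j$ avoiding the exceptional discs around the zeros, which is exactly the summable-radii argument you wrote out explicitly (and avoiding those discs already guarantees invertibility, so your extra removal of radii through poles is redundant but harmless).
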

\begin{proof}
By Lemma \ref{phiLemma} the function $\varphi$ is entire and of order $p$. Check out the results 8.71 and 8.711 on page 273 of Titchmarsh \cite{Titch}. They tell that $\varphi(\lambda)$ decreases at around the speed of $e^{-\abs{\lambda}^p}$. More precicely, they imply that for any $\varepsilon > 0$ there are $0 < r_0 < r_1 < \ldots \to \infty$ such that the circles $S(0,r_j)$ do not touch the zeros of $\varphi$, which are the poles $\lambda_0, \lambda_1, \ldots$ of $(\lambda - \A)^{-1}$, and that
\begin{equation}
\abs{\varphi(\lambda)} > e^{- r_j^{p+\varepsilon}} \text{ on } \abs{\lambda} = r_j.
\end{equation}
Hence by Lemma \ref{TtoALemma}, Lemma \ref{compRes} and Proposition \ref{carlemanProp} we have
\begin{multline}
\norm{T^{-1}(\lambda)} \leq \norm{(\A - \lambda)^{-1}} = \norm{(\A - \lambda')^{-1} \left( \operatorname{Id} - (\lambda - \lambda')(\A - \lambda')^{-1} \right)^{-1} } \\
\leq \norm{ (\A-\lambda')^{-1}} \abs{\varphi(\lambda)^{-1}} \norm{\varphi(\lambda) \left( \operatorname{Id} - (\lambda -\lambda')(\A-\lambda')^{-1} \right)^{-1}} \\
\leq \norm{(\A - \lambda')^{-1}} e^{r_j^{p+ \varepsilon}} C e^{Cr_j^p} \leq e^{C r_j^{p+\varepsilon}},
\end{multline}
when $\abs{\lambda} = r_j$. The constant $C$ at the end depends on $\lambda', \A, r_0, p$ and $\varepsilon$.
\end{proof}

\begin{corollary}
Let $\O \subset \R^n$ be a bounded domain, $q \in C^\infty(\overline{\O})$ positive, bounded away from zero and infinity and let $p > n/2$. Assume that $f,g \in L^2(\O)$ with $\norm{f} = \norm{g} = 1$. Then there are real numbers $0 < r_0 < r_1 < \ldots \to \infty$ such that
\begin{equation}
\abs{\big( T(\lambda)^{-1}f,g\big)} \leq \exp C_\varepsilon \abs{\lambda}^{p + \varepsilon}
\end{equation}
on $\abs{\lambda} = r_0, r_1, \ldots$
\end{corollary}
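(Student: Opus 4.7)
The plan is to obtain this corollary as a direct consequence of Theorem \ref{TonCircles}, which has already supplied both the sequence of radii $r_j \to \infty$ and the operator-norm estimate on the corresponding circles. The only remaining task is to transfer the bound from the operator norm $\norm{T(\lambda)^{-1}}_{L^2 \to L^2}$ to the scalar quantity $\abs{(T(\lambda)^{-1}f, g)}$, which is a one-line application of the Cauchy--Schwarz inequality on $L^2(\O)$.

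More concretely, first I would invoke Theorem \ref{TonCircles} with the given $\varepsilon > 0$ to produce radii $0 < r_0 < r_1 < \cdots \to \infty$ on which $T(\lambda)$ is invertible and
\begin{equation}
\norm{T(\lambda)^{-1}}_{L^2(\O) \to L^2(\O)} \leq e^{C_\varepsilon \abs{\lambda}^{p+\varepsilon}}, \qquad \abs{\lambda} \in \{r_0, r_1, \ldots\}.
\end{equation}
Then, for $\lambda$ on any of these circles and any $f, g \in L^2(\O)$ with $\norm{f} = \norm{g} = 1$, I would estimate
\begin{equation}
\abs{\big( T(\lambda)^{-1}f, g \big)} \leq \norm{T(\lambda)^{-1} f}_{L^2(\O)} \norm{g}_{L^2(\O)} \leq \norm{T(\lambda)^{-1}}_{L^2 \to L^2} \norm{f}_{L^2} \norm{g}_{L^2},
\end{equation}
and conclude by substituting the bound from Theorem \ref{TonCircles} and using $\norm{f} = \norm{g} = 1$.

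There is no real obstacle here: the heavy lifting (the linearization to $\A$, the Weierstrass product $\varphi$, Carleman's inequality, and the extraction of the radii $r_j$ from the lower bound on $\abs{\varphi(\lambda)}$) has all been carried out in the preceding lemmas and in Theorem \ref{TonCircles}. If anything, the only thing worth noting is that the sequence $\{r_j\}$ and the constant $C_\varepsilon$ are precisely those furnished by Theorem \ref{TonCircles} and depend only on $\A$, $\lambda'$, $p$ and $\varepsilon$, not on $f$ or $g$, so the same radii work uniformly for all unit vectors $f, g \in L^2(\O)$. This is exactly the estimate needed to verify Assumption~3 of Section \ref{meroSect} with $\beta = p + \varepsilon$, feeding into the Phragm\'en--Lindel\"of step of Theorem \ref{denseThm}.
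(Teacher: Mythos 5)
Your proposal is correct and matches the paper's argument, which simply deduces the corollary directly from Theorem \ref{TonCircles}; the Cauchy--Schwarz step you spell out is exactly the implicit content of that deduction.
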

\begin{proof}
This follows directly from Theorem \ref{TonCircles}.
\end{proof}

\medskip
We can prove an upper bound for the counting function of the transmission eigenvalues after knowing that $\lambda_j^{-1} \in \ell^p$. The result is probably not the best one. See for example Lakshtanov and Vainberg \cite{LakshtanovVainberg} and other recent papers from the same authors, where they prove a Weyl law with exponent $n/2$ in non-isotropic cases. Using the theorems of \cite{Agranovich1992} or \cite{BoimatovKostyuchenko} it seems that our parameter-elliptic operator with weight $2$ would get a Weyl law with exponent $n/2$. Asymptotics of the counting function are not the topic of this paper, so we contend with just giving the following result which has a very simple proof.

\begin{theorem}
\label{WeylLawThm}
Let $\O \subset \R^n$ be a bounded domain, $q \in C^\infty(\overline{\O})$ positive, bounded away from zero and infinity and let $\varepsilon > 0$. If $\lambda_0, \lambda_1, \ldots$ are the poles of $T(\lambda)^{-1}$ counting multiplicities and $T$ is invertible at $\lambda' \in \C$, then
\begin{equation}
\# \{ j \in \N \mid \abs{\lambda_j - \lambda'} < t \} \leq C t ^{n/2 + \epsilon}
\end{equation}
with $C = \norm{(\A - \lambda')^{-1}}_{\mathscr{C}^{n/2 + \varepsilon}} < \infty$.
\end{theorem}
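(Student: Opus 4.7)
The plan is to translate the counting of poles $\lambda_j$ into a counting of eigenvalues of the compact operator $(\A - \lambda')^{-1}$, and then apply the standard relation between Schatten norms, eigenvalue $\ell^p$-sums (Weyl's inequality), and counting functions via Chebyshev.

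First I would use Lemma \ref{TtoALemma} to identify the poles of $T(\lambda)^{-1}$ with the eigenvalues of $\A$ (counted with algebraic multiplicity): the equation $T(\lambda)u = 0$ has a nontrivial solution iff $\A - \lambda$ is not invertible, and the multiplicity orders match by the linearization of Section 2. Consequently $(\lambda_j - \lambda')^{-1}$ is the sequence of nonzero eigenvalues of the compact operator $K := (\A - \lambda')^{-1}$. By Lemma \ref{compRes} we have $K \in \mathscr{C}^{p}$ for every $p > n/2$; fix $p = n/2 + \varepsilon$.

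Next I invoke Weyl's inequality for compact operators (see e.g.\ \cite[XI.9]{DunfordSchwartz2}), which gives
\begin{equation}
\sum_{j} \abs{\lambda_j - \lambda'}^{-p} \;=\; \sum_j \abs{\mu_j(K)}^{p} \;\leq\; \norm{K}_{\mathscr{C}^p}^{p} \;=\; \norm{(\A-\lambda')^{-1}}_{\mathscr{C}^{n/2+\varepsilon}}^{p},
\end{equation}
where the first equality follows from step one. A Chebyshev-type estimate then yields
\begin{equation}
\# \{ j \mid \abs{\lambda_j - \lambda'} < t \} \;=\; \#\{ j \mid \abs{\lambda_j - \lambda'}^{-1} > t^{-1} \} \;\leq\; t^{p} \sum_{j} \abs{\lambda_j - \lambda'}^{-p},
\end{equation}
and combining these two displays gives the claimed bound with $C = \norm{(\A-\lambda')^{-1}}_{\mathscr{C}^{n/2+\varepsilon}}^{p}$ (absorbing the $p$-th power into the constant, or equivalently replacing $\varepsilon$ by $\varepsilon/2$ at the outset so that the bare Schatten norm suffices on the right).

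I don't anticipate any serious obstacle: the whole argument rests on Lemma \ref{compRes} (to put $(\A-\lambda')^{-1}$ in a Schatten class) and on two off-the-shelf facts about compact operators, namely Weyl's inequality and Chebyshev's inequality. The only small subtlety is checking that the multiplicity with which $\lambda_j$ is listed as a pole of $T(\lambda)^{-1}$ agrees with its algebraic multiplicity as an eigenvalue of $\A$, but this is already implicit in the linearization discussion preceding Lemma \ref{TtoALemma}.
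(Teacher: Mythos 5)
Your proposal is correct and follows essentially the same route as the paper: identify the poles $\lambda_j$ with eigenvalues of $\A$ via Lemma \ref{TtoALemma}, use Lemma \ref{compRes} to place $(\A-\lambda')^{-1}$ in $\mathscr{C}^{n/2+\varepsilon}$, dominate eigenvalues by singular values (Weyl's inequality), and finish with Chebyshev. Your remark that the constant should really be the $p$-th power of the Schatten norm is a fair point of precision that the paper glosses over.
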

\begin{proof}
By Lemma \ref{TtoALemma} we know that $T(\lambda)^{-1}$ is not bounded exactly when $\lambda$ is an eigenvalue of $\A$, hence when $(\lambda - \lambda')^{-1}$ is an eigenvalue of $(\A - \lambda')^{-1}$. The latter is Schatten $n/2 + \varepsilon$ in $H^2_0(\O) \times L^2(\O)$ by Lemma \ref{compRes}. Hence its eigenvalues $(\lambda_j - \lambda')^{-1}$ and singular values $\sigma_j$ satisfy
\begin{equation}
\norm{ (\lambda_j - \lambda')^{-1} }_{\ell^{n/2 + \varepsilon}} \leq \norm{ \sigma_j }_{\ell^{n/2 + \varepsilon}} = \norm{ (\A - \lambda')^{-1} }_{\mathscr{C}^{n/2 + \varepsilon}} < \infty.
\end{equation}
Write $f(j) = \abs{(\lambda_j - \lambda')^{-1}}$. We want to estimate $\# \{ j \mid f(j) > t^{-1} \}$. By Chebyshev's inequality \cite{Chebyshev}, we get
\begin{multline}
\# \{ j \mid f(j) > t^{-1} \} \\
\leq \frac{1}{t^{-{(n/2 + \varepsilon)}}} \sum_{j=0}^\infty \abs{f(j)}^{n/2 + \varepsilon} = \norm{(\A - \lambda')^{-1}}_{\mathscr{C}^{n/2 + \varepsilon}} t^{n/2 + \varepsilon} .
\end{multline}
\end{proof}

\newpage
\section{The cases of Schr\"odinger and Helmholtz equations}
\label{lastSection}
We will now show that all the three assumptions of Section \ref{meroSect} are true for both of $T_H$ and $T_S$ given by
\begin{equation}
\begin{split}
T_H : \,&H^4\cap H^2_0(\O) \to L^2(\O)\\
& u \mapsto  \Delta q \Delta u - \lambda(\Delta q + q\Delta + \Delta) u + \lambda^2(1+q) u \\
T_S : \,&H^4\cap H^2_0(\O) \to L^2(\O)\\
& u \mapsto  \Delta q \Delta u + \Delta - \lambda(\Delta q + q\Delta + 1) u + \lambda^2q u
\end{split}
\end{equation}
We will use Theorem \ref{merothm} and Theorem \ref{TonCircles}.

\begin{theorem}
Let $\O \subset \R^n$ be a bounded domain, $q \in C^\infty(\overline{\O})$ positive, bounded away from zero and infinity and let $T = T_H$ or $T = T_S$. Then the set of generalized eigenstates corresponding to all singular values of $\lambda \mapsto T(\lambda)$ is complete in $L^2(\O)$.
\end{theorem}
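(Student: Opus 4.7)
The plan is to verify the three hypotheses of Theorem \ref{denseThm} with $H = L^2(\Omega)$, $D = H^4 \cap H^2_0(\Omega)$, and growth exponent $\beta = p + \varepsilon$ for a fixed $p > n/2$ and a small $\varepsilon > 0$. Completeness of the generalized eigenstates then follows immediately from that theorem, so the whole argument reduces to checking the three assumptions for $T \in \{T_H, T_S\}$.

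Assumption 1 (analyticity of $T : \mathbb{C} \to \mathscr{L}(D,H)$) is trivial because each of $T_H, T_S$ is an explicit polynomial of degree two in $\lambda$ with coefficients in $\mathscr{L}(D,H)$. For Assumption 2 I would invoke Theorem \ref{merothm}: it gives meromorphicity of $T(\lambda)^{-1}$ and the estimate $\norm{T(\lambda)^{-1}} \leq C_\Gamma \abs{\lambda}^{-2}$ on every ray $\Gamma$ from the origin that is not the direction of $\R_-$ and avoids the discrete pole set; in particular $\norm{T(\lambda)^{-1}} \to 0$ along each such ray. To meet the cone condition, pick an integer $N > 2\beta$ and choose $N$ rays from the origin whose directions avoid $(-1,0)$ and the countably many pole-directions, with neighboring rays separated by at most $2\pi/N < \pi/\beta$. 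The resulting $N$ cones all have angle less than $\pi/\beta$, and $\norm{T(\lambda)^{-1}}$ is bounded and tends to zero along each bounding ray, which also supplies the sequence on which the norm vanishes.

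For Assumption 3, let $f, g \in L^2(\Omega)$ with $\norm{f} = \norm{g} = 1$ be such that $w(\lambda) := (T(\lambda)^{-1} f, g)$ is entire. Theorem \ref{TonCircles} supplies a sequence $r_j \to \infty$ on which $\norm{T(\lambda)^{-1}} \leq \exp(C_\varepsilon \abs{\lambda}^{p+\varepsilon})$, so $\abs{w(\lambda)} \leq \exp(C_\varepsilon r_j^{p+\varepsilon})$ on the circles $\abs{\lambda} = r_j$. Since $w$ is entire, the maximum modulus principle propagates this bound to the enclosed disks; choosing, for each large $\lambda$, the smallest $r_j \geq \abs{\lambda}$ and using that the construction from Titchmarsh 8.71 produces $r_j$ comparable to $\abs{\lambda}$ (up to a factor that is absorbed by enlarging $\varepsilon$ slightly), we obtain $\abs{w(\lambda)} \leq \exp(C \abs{\lambda}^\beta)$ for $\abs{\lambda}$ large, which is exactly Assumption 3.

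With all three assumptions verified, Theorem \ref{denseThm} concludes that $\operatorname{span} \bigcup_j \sp(\lambda_j)$ is dense in $L^2(\Omega)$. The only point that requires a moment's thought is the compatibility between the growth exponent and the cone condition: $\beta$ is forced by the Schatten exponent $p > n/2$, whereas the number of partition rays $N$ is free, so one simply chooses $N$ large enough that $2\pi/N < \pi/\beta$ and these two requirements do not compete. A secondary technical point is the comparability of consecutive $r_j$, which allows the circle estimates of Theorem \ref{TonCircles} to be turned into a global Phragm\'en--Lindel\"of-ready bound via maximum modulus.
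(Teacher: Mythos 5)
Your proposal is correct and follows essentially the same route as the paper: one verifies the three hypotheses of Theorem \ref{denseThm} with $H=L^2(\Omega)$, $D=H^4\cap H^2_0(\Omega)$, using Theorem \ref{merothm} for the ray/cone condition and the growth results of Section \ref{growthSect} for Assumption 3, and your maximum-modulus interpolation between the circles $|\lambda|=r_j$ (with consecutive good radii comparable, as Titchmarsh 8.71 indeed permits) is a legitimate, if anything slightly more careful, way to obtain the bound $e^{C|\lambda|^{p+\varepsilon}}$ for all large $\lambda$, where the paper instead also cites Corollary \ref{inverseEstCor2}. The one small point to fix: anchor the ray partition at a point $\lambda'$ of invertibility of $T$ (as the paper does) rather than at the origin, since a priori $\lambda=0$ could itself be a pole of $T(\lambda)^{-1}$ (this is not excluded for $T_S$), in which case every ray from the origin has a pole at the common vertex and both the boundedness on the partition rays and the requirement that no cone boundary contains a pole would fail.
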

\begin{proof}
Assumption 1 is true because $\lambda \mapsto T(\lambda)$ is a polynomial. For the two other ones, let $\beta = p + \varepsilon$ for some $p > n/2$ and $\varepsilon > 0$. Take $\lambda' \in \R_+$ such that $T(\lambda')$ is invertible. This exists by Proposition \ref{invprop}. Now draw a ray $\Gamma_j$ from $\lambda'$ to  the pole $\lambda_j$ of $T^{-1}(\lambda)$, and let $\Gamma_{-1} = \lambda' + \R_-$. These form a countable set by Theorem \ref{merothm}, and they are the only rays starting at $\lambda'$ on which
\begin{equation}
\norm{T^{-1}(\lambda)} \leq C \abs{\lambda}^{-2}
\end{equation}
will not necessarily hold. Hence it is possible to satisfy Assumption 2 no matter how small $\pi / \beta > 0$ is. Assumption 3 has been shown to be true in the corollaries after Theorem \ref{inverseEstimate} and Theorem \ref{TonCircles}. The claim follows from Theorem \ref{denseThm}.
\end{proof}

\begin{remark}
This method actually shows too that there is an infinite number of transmission eigenvalues, and that they form a discrete set. Discreteness follows from the fact that $\lambda \mapsto T(\lambda)^{-1}$ is meromorphic. Existence follows because the generalized eigenstates are dense in $L^2(\O)$. Given a single transmission eigenvalue $\lambda_0$, the space spanned by the corresponding generalized eigenstates $\operatorname{span} \sp(\lambda_0)$ is finite dimensional. To see this, check \cite[Chp I. 4]{KeldyshENG}. Substitute
\begin{equation}
A(\lambda) = \big( T(\lambda') - T(\lambda) \big) T(\lambda')^{-1},
\end{equation}
for his operator $A(\lambda)$. The point $\lambda'$ is any point of invertibility.

We showed that $\cup_{j=0}^\infty \operatorname{span} \sp(\lambda_j)$ is dense in $L^2(\O)$, and each span is finite dimensional. Hence the set $\{ \lambda_j \mid j \in \N \}$ is infinite.
\end{remark}

\medskip
We end this paper by mentioning some generalized interior transmission eigenvalue problems for which this method seems to work. The first one was mentioned already in the introduction. Namely, finding nontrivial $v,w \in H^4$ such that
\begin{equation}
\label{newITE}
\begin{aligned}
\big(\Delta + k^2(1+m)\big) w &= 0 \quad \text{in } \Omega\\
(\Delta + k^2) v &= 0 \quad \text{in } \Omega\\
\partial_\nu^{m_1}(w - v) &= 0 \quad\text{on } \partial\Omega \\
\partial_\nu^{m_2}(w - v) &= 0 \quad\text{on } \partial \Omega
\end{aligned}
\end{equation}
where $m_1 \neq m_2$ are any numbers from $\{0, 1, 2, 3\}$.

Again, write $\lambda = -k^2$ and $q = \tfrac{1}{m}$. Then problem \eqref{newITE} has a nontrivial solution if and only if the problem
\begin{equation}
\begin{aligned}
T(\lambda) u&= \Delta q \Delta u - \lambda(\Delta q + q\Delta + \Delta) u + \lambda^2(1+q) u = 0 ,\\
u &\in H^4_*(\Omega) .
\end{aligned}
\end{equation}
has a nontrivial solution. We will in fact have $u = w - v$ and $ w = q(\Delta - \lambda) u$, $v = q\left( \Delta - \lambda(1 + 1/q) \right)u$. But here we have
\begin{equation}
H^4_*(\Omega) = \{ u \in H^4(\Omega) \mid \partial_\nu^{m_1} u|_{\partial\Omega} = 0, \,\partial_\nu^{m_2} u|_{\partial \Omega} = 0 \}.
\end{equation}

We have to check that the three assumptions of Section \ref{meroSect} hold. The space $H^4_*(\Omega)$ is a Hilbert space dense in $L^2(\O)$ and $T(\lambda)$ is clearly analytic. The ellipticity conditions of Agranovich and Vishik are checked as in Proposition \ref{cond1prop} and Proposition \ref{cond2prop}. The only difference is caused by the boundary terms, and will happen when solving \eqref{helmholtzODE}. The method of characteristic polynomial still gives the exact same general solution. When choosing a particular solution when $\lambda = 0$ we will arrive at the determinant
\begin{equation}
\left\vert \begin{matrix} (-\abs{\xi'})^{m_1} & (-\abs{\xi'})^{m_1-1}m_1\\ (-\abs{\xi'})^{m_2} & (-\abs{\xi'})^{m_2-1}m_2 \end{matrix} \right\rvert \neq 0
\end{equation}
When $\lambda \neq 0$, the determinant is
\begin{equation}
\left\vert \begin{matrix} r_2^{m_1} & r_4^{m_1} \\ r_2^{m_2} & r_4^{m_2} \end{matrix} \right\rvert = r_2^{m_1}r_4^{m_2} - r_2^{m_2}r_4^{m_1} = r_2^{m_1}r_4^{m_1}\big(r_4^{m_2-m_1} - r_2^{m_2-m_1}\big) \neq 0
\end{equation} 
because $m_1 \neq m_2$ and $r_2 \neq r_4$. Now Agranovich and Vishik give us
\begin{equation}
\norm{ T(\lambda)^{-1} f }_{L^2(\O)} \leq C \abs{\lambda}^{-2} \norm{f}_{L^2(\O)}
\end{equation}
for $\lambda$ big enough in cones not touching $\R_-$. This will imply Assumpion 2 from Section \ref{meroSect}. To prove Assumption 3, just follow the proof of Corollary \ref{inverseEstCor2}.

\medskip
Other possible generalizations include having a non-isotropic metric, for example by switching $\Delta$ to $\nabla \cdot K \nabla$ with positive symmetric $K$. This kind of equation with Robin boundary conditions comes from optical tomography. With this change, only the conditions of Agranovich and Vishik need to be checked, which may still prove to be a tedious calculation. Continuing on this line, we can change $\Delta$ into any other elliptic operator, but again, the calculations may become tedious.

Another generalization is to change the homogeneous boundary conditions $u = 0$, $\partial_\nu u = 0$ to inhomogeneous ones $u = g_1$, $\partial_\nu u = g_2$. Here Agranovich and Vishik's result works immediately since it does not care about whether the equations are homogeneous or not. Instead, problems arise in the other parts of the proof because our operator $T(\lambda)$ will then be defined on an affine space which is not a vector space. Fixing this would require some changes which are out of the scope of this paper.

\bibliographystyle{plain}
\bibliography{DensBib}

\end{document}